\newtheorem{theorem}{Theorem}
\newtheorem{claim}{Claim}[theorem]
\newtheorem{proposition}[theorem]{Proposition}
\newtheorem{lemma}[theorem]{Lemma}
\def\claimqed{\smash{\scalebox{.75}[0.75]{$(\square$)}}}
\title{Characterisations for split graphs and unbalanced split graphs\tnoteref{t1}}
\author{Hany Ibrahim}
\address{University of applied science Mittweida, Germany}
\date{}
\begin{document}

\begin{abstract}
	We introduce a characterisation for split graphs by using edge contraction. First, we use it to prove that any ($2K_{2}$, claw)-free graph with $\alpha(G) \geq 3$ is a split graph. Next, we apply it to characterise any pseudo-split graph. Finally, by using edge contraction again, we characterise unbalanced split graphs, which we use to characterise Nordhaus--Gaddum graphs.
\end{abstract}

\maketitle

\section{Introduction}
Given graph $G$, we denote its vertex set by $V(G)$ ($V$ in short) if the underlying graph is clear. Similarly, the edge set of $G$ is $E(G)$ ($E$ in short). For vertex $v \in V$, the set of vertices adjacent to $v$ in $G$ is denoted by $N(v)$. Further, for a set of vertices, $S \subseteq V$, the set of vertices adjacent to $v$ in $S$ is denoted by $N_{S}(v)$ and defined as $N_{S}(v) = N(v) \cap S$. The neighbour of $S$, denoted by $N(S)$, is $\bigcup_{v \in S}N(v) \setminus S$. A vertex set, $S$, is called \emph{dominating} if $N(S) \cup S = V$. We denote a cycle graph, a complete graph, and an edgeless graph by $C_{n}$, $K_{n}$, and $E_{n}$, respectively, where $n$ is the order of the graph. 
Graph $G[S]$ has $S$ as the vertex set, and two vertices are adjacent if and only if they are adjacent in $G$. $G[S]$ is called the \emph{vertex-induced subgraph} of $G$. A \emph{clique} is a vertex set that induces a complete subgraph in $G$. A set of mutually nonadjacent vertices is \emph{independent}. Furthermore, if two edges have no common vertex, they are called \emph{independent}. We denote a graph with four vertices and two independent edges by $2K_{2}$.

Graph $G$ is called \emph{$H$-free} if every vertex-induced (induced in short) subgraph of $G$ is not isomorphic to graph $H$. Furthermore, $G$ is \emph{$(H_{1},H_{2},\dots,H_{k})$-free} if every vertex-induced subgraph of $G$ is not isomorphic to any graph $H_{i}$, where $1 \leq i \leq k$. For two disjoint graphs, $G$ and $H$, the graph constructed from $G \cup H$ by adding edges from any vertex in $G$ to any vertex in $H$ can be denoted by $G \vee H$. 

By \emph{identifying} two adjacent vertices, $u$ and $v$, or \emph{contracting} the edge between them, we obtain a graph constructed from $G$ by adding an edge between $u$ and every neighbour of $v$. Then, $v$, along with all loops, is deleted, followed by the deletion of all but one edge that forms multiple edges between any two vertices. We denote the graph obtained by identifying $u$ and $v$ as $G/uv$. If $e$ is the edge between $u$ and $v$, then we denote graph $G/uv$ by $G/e$. 
In this study, we assume that any graph is a connected simple graph, unless otherwise stated. For any other notion, we follow \cite{bondy2000graph}. Long proofs are divided into small claims in which only the ambiguous aspects are proven, for example, Lemmas \ref{C4 characterisation}, Lemma \ref{2K2 characterisation} and Theorem \ref{Proposition: Proving 2K2,claw by using split graph edge contraction characterisation}.

The $KS$-partition of a graph is a partition of the vertex set, where $K$ is a clique and $S$ is an independent set. Graph $G$ is called \emph{split} if it admits a $KS$-partition. Split graphs were introduced in \cite{hammer1977} and characterised as follows:
\begin{theorem}\cite{hammer1977}\label{Theorem: split graph hammer characterisation}
	Graph $G$ is split if and only if $G$ is ($2K_{2}$, $C_{5}$, $C_{4}$)-free.
\end{theorem}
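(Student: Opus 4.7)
My plan is to prove the biconditional by handling each direction separately. For the forward direction, assume $V(G) = K \cup S$ is a split partition, and for each $H \in \{2K_{2}, C_{4}, C_{5}\}$ suppose for contradiction that $G$ has an induced copy of $H$ on vertex set $U$. Every edge of $G[U]$ has at least one endpoint in $K$ (since $S$ is independent), while every two vertices of $U \cap K$ are adjacent in $G[U]$ (since $K$ is a clique). A short enumeration of placements of $U$ rules out $H = 2K_{2}$ and $H = C_{4}$; for $H = C_{5}$ I would use $\omega(C_{5}) = \alpha(C_{5}) = 2$ to get $|U| \leq |U \cap K| + |U \cap S| \leq 4 < 5$, a contradiction.

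For the converse I would take $K$ a maximum clique and aim to show $S = V(G) \setminus K$ is independent. Assume otherwise: let $xy \in E(G)$ with $x, y \in S$, and set $A_{x} = K \setminus N(x)$ and $A_{y} = K \setminus N(y)$, both nonempty by maximality. Each of the easy subcases produces an explicit forbidden subgraph on four vertices of $K \cup \{x, y\}$. If $A_{x} \cap A_{y} = \emptyset$, any $a \in A_{x}$ and $b \in A_{y}$ satisfy $a \sim y$, $b \sim x$, and $ab \in E(G)$, so $\{a, b, x, y\}$ induces $C_{4}$. If $|A_{x} \cap A_{y}| \geq 2$, any two common non-neighbours $c, c'$ give $\{c, c', x, y\}$ inducing $2K_{2}$ (only edges $cc'$, $xy$). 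If both $A_{x} \setminus A_{y}$ and $A_{y} \setminus A_{x}$ are nonempty, private choices of $a, b$ produce an induced $C_{4}$ on $\{a, b, x, y\}$ by the same template. Finally, if $A_{x} = A_{y} = \{c\}$, then $(K \setminus \{c\}) \cup \{x, y\}$ is a clique of size $|K| + 1$, contradicting maximality of $K$.

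The main obstacle is the remaining asymmetric subcase --- $A_{x} \cap A_{y} = \{c\}$ with (up to symmetry) $A_{y} = \{c\}$ but $A_{x} \supsetneq \{c\}$ --- where no four-vertex set built from $x$, $y$, $c$, and a private $a \in A_{x} \setminus \{c\}$ induces a forbidden graph: the natural candidate $\{a, c, x, y\}$ is only a $P_{4}$. I would dispatch this by initially choosing $K$ to minimise $|E(G[V(G) \setminus K])|$ among all maximum cliques. Then the alternative maximum clique $K' = (K \setminus \{c\}) \cup \{y\}$, legitimate because $A_{y} = \{c\}$, cannot strictly reduce that quantity, which forces $c$ to have some neighbour $w \in S \setminus \{y\}$; moreover $w \neq x$ since $cx \notin E(G)$. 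The set $\{c, w, x, y\}$ then induces $2K_{2}$ unless $w \sim x$ or $w \sim y$; in the remaining adjacency patterns between $w$ and $\{a, x, y\}$, explicit $C_{4}$'s or $C_{5}$'s can be exhibited on $\{a, c, w, x, y\}$, and the one genuinely stubborn configuration is resolved by picking a non-neighbour of $w$ in $K$ (which exists by maximality applied to the edge $wy$) and producing an induced $C_{4}$ on the resulting four vertices. Completing this final cascade cleanly, and verifying that the extremal choice of $K$ prevents any infinite descent, is the delicate technical heart of the proof.
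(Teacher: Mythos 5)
Your proposal is correct, but note that the paper does not prove this statement at all: Theorem \ref{Theorem: split graph hammer characterisation} is imported from Foldes and Hammer's 1977 paper and used as a black box, so what you have written is a self-contained proof of the classical result rather than a variant of anything in the text. Your argument is essentially the standard one (forward direction by counting how an induced $2K_{2}$, $C_{4}$ or $C_{5}$ could meet a clique and an independent set; converse by taking a maximum clique $K$ minimising the number of edges outside $K$ and analysing the non-neighbourhoods $A_{x}, A_{y}$ of two adjacent vertices of $S$), and I checked that the ``delicate heart'' you flag does close along the lines you sketch. Two small corrections to your bookkeeping: after the exchange argument produces the neighbour $w$ of $c$ in $S\setminus\{y\}$, there are \emph{two} stubborn adjacency patterns, not one (namely $w$ adjacent to $a,c,x,y$ and $w$ adjacent to $a,c,y$ only), but both have $w$ adjacent to $y$, and they are killed uniformly: a non-neighbour $d$ of $w$ in $K$ exists simply because $K$ is a maximum clique and $w\notin K$ (no appeal to the edge $wy$ is needed), $d\neq c$ since $w$ is adjacent to $c$, hence $d$ is adjacent to $y$ because $A_{y}=\{c\}$, and then $\{c,d,w,y\}$ induces $C_{4}$; the cases with $w$ adjacent to $x$ but not $y$ give an induced $C_{4}$ on $\{a,w,x,y\}$ or an induced $C_{5}$ on $\{a,c,w,x,y\}$ as you indicate. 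Also, there is no ``infinite descent'' to worry about: the extremal choice of $K$ is made once and invoked once, so that concern can be dropped. With those details written out, the proof is complete; the only trade-off versus simply citing \cite{hammer1977}, as the paper does, is length.
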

In addition, split graphs were characterised in \cite{hammer1977} as chordal graphs whose complements are also chordal. Furthermore, these graphs were characterised by their degree sequences in \cite{hammer1981splittance}. 

In Section \ref{Section: split characterisation}, we present the characterisation of the split graphs. Then, in Section \ref{Section: 2K2,claw free}, we use it to prove that any ($2K_{2}$, claw)-free graph with $\alpha(G) \geq 3$ is a split graph. Finally, in Section \ref{Section: unbalanced split}, we characterise unbalanced split graphs by using edge contraction, and we use this to characterise Nordhaus--Gaddum graphs.

\section{Split graphs characterisation}\label{Section: split characterisation}
\begin{proposition}\label{Proposition: contraction of vertex adjacent to some neighbours}
	Let $G$ be a graph, $C \subseteq V(G)$ with $u \in C$ and $v \notin C$, where $u$ and $v$ are adjacent. If $N_{C}(v)\setminus \{u\} \subseteq N_{C}(u)$, then $G[C]$ is isomorphic to $G/uv[C]$.
\end{proposition}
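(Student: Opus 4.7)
The plan is to show directly that $G[C]$ and $G/uv[C]$ have the same vertex set and the same edge set, so the isomorphism is in fact the identity map on $C$.

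First I would note that the vertex sets agree: $C \subseteq V(G)$ by assumption, and since the contraction $G/uv$ removes only $v$ (identifying it with $u$) and $v \notin C$ while $u \in C$, we still have $C \subseteq V(G/uv)$. So both induced subgraphs are on vertex set $C$.

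Next I would split the edge analysis into two cases. For a pair $x,y \in C$ with $x,y \neq u$, contracting the edge $uv$ does not alter adjacency between $x$ and $y$ (the contraction only modifies the neighbourhood of the merged vertex), hence $xy \in E(G[C])$ iff $xy \in E(G/uv[C])$. The only real content is the case $x = u$, $y \in C \setminus \{u\}$. Here I need the two equivalent statements: $y \in N_{G}(u)$ iff $y \in N_{G/uv}(u)$. The forward direction is immediate since contraction only adds edges at $u$. For the reverse, if $y \in N_{G/uv}(u)$ then in $G$ either $yu \in E$ or $yv \in E$; in the latter case, because $y \in C$ and $y \neq u$, we have $y \in N_{C}(v) \setminus \{u\}$, and the hypothesis $N_{C}(v) \setminus \{u\} \subseteq N_{C}(u)$ gives $y \in N_{G}(u)$ as required.

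There is no real obstacle here; the statement is essentially a verification. The only subtlety worth flagging is that $y \in C$ guarantees $y \neq v$, which is what allows the hypothesis on $N_{C}(v) \setminus \{u\}$ to apply cleanly when recovering the edge $uy$ in $G$ from an edge at the contracted vertex in $G/uv$. Concluding, every edge of $G[C]$ is an edge of $G/uv[C]$ and vice versa, so the two graphs coincide and are therefore isomorphic.
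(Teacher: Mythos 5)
Your proof is correct: the identity map on $C$ is indeed an isomorphism, and the only point requiring the hypothesis $N_{C}(v)\setminus\{u\} \subseteq N_{C}(u)$ is recovering $uy \in E(G)$ from $uy \in E(G/uv)$ when the adjacency came via $v$, which you handle cleanly (using $v \notin C$ so $y \neq v$). The paper states this proposition without proof, and your direct verification is exactly the routine argument it leaves implicit.
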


\begin{proposition}\label{Proposition: contraction of vertices adjacent outside C}
	Let $G$ be a graph and $C \subseteq V(G)$. If $u$ and $v \notin C$, where $u$ and $v$ are adjacent, then $G[C]$ is isomorphic to $G/uv[C]$.
\end{proposition}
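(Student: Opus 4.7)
The plan is to verify that the identity map on $C$ serves as the required graph isomorphism between $G[C]$ and $G/uv[C]$. Since both $u$ and $v$ lie outside $C$, we have $C \subseteq V(G)\setminus\{v\} = V(G/uv)$, so $C$ is a valid vertex subset of both graphs, and it remains only to check that the edge sets coincide.

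I would unpack the definition of contraction given in the introduction and track which edges among vertices of $C$ can possibly differ between $G$ and $G/uv$. Contracting $uv$ does three things: it adds an edge from $u$ to every neighbour of $v$; it deletes $v$ together with any loops; and it removes duplicates from any multiple edges thereby created. Because every newly added edge is incident to $u$, and because $u \notin C$, none of the new edges has both endpoints in $C$. The deletion of $v$ does not affect $C$ either, since $v \notin C$. Finally, the multi-edge clean-up only acts on parallel edges produced in the previous step, all of which are incident to $u$, so again nothing inside $C$ is touched.

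Consequently, for every pair $x,y \in C$, we have $xy \in E(G/uv)$ if and only if $xy \in E(G)$, which gives $E(G[C]) = E(G/uv[C])$ and establishes the isomorphism. The only place one has to be mildly careful is the multi-edge clean-up step, but since new edges are introduced exclusively at $u$, no edge with both endpoints in $C$ can ever be duplicated or subsequently deleted; this observation is what makes the hypothesis $u,v\notin C$ the right one, and the proof amounts to nothing more than bookkeeping the three stages of contraction against the set $C$.
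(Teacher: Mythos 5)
Your proof is correct: the identity map on $C$ is indeed the required isomorphism, and your stage-by-stage bookkeeping (new edges all incident to $u \notin C$, deletion of $v \notin C$, multi-edge clean-up confined to edges at $u$) is exactly the reasoning the paper relies on. The paper states this proposition without proof, treating it as immediate, so your write-up simply makes explicit the argument the author intended.
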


\begin{proposition}\label{Proposition: contraction of vertices not dominated by C}
	Let $G$ be a graph and $C \subseteq V(G)$. If $C$ is not dominant, then there is an edge, $e \in E(G)$, such that $G[C]$ is isomorphic to $G/e[C]$.
\end{proposition}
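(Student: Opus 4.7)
The plan is to reduce this statement directly to Proposition \ref{Proposition: contraction of vertices adjacent outside C}: it suffices to locate an edge of $G$ whose both endpoints lie outside $C$, because contracting such an edge cannot affect the induced subgraph on $C$.

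First, I would unpack the hypothesis that $C$ is not dominating, which gives a vertex $w \in V(G) \setminus (C \cup N(C))$. By definition $w$ is not in $C$ and has no neighbour in $C$. Next, I would invoke the standing assumption in the paper that $G$ is connected (and nontrivial, since $w$ witnesses that $V(G) \setminus C$ is nonempty and $C$ itself may be empty or not, but in either case $G$ having an edge is needed). Connectedness, together with the fact that $G$ has at least two vertices, forces $w$ to have at least one neighbour $v \in V(G)$.

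Then I would argue that $v \notin C$: if $v$ were in $C$, then $w$ would lie in $N(C)$, contradicting the choice of $w$. Thus the edge $e = wv$ has both endpoints in $V(G) \setminus C$. Applying Proposition \ref{Proposition: contraction of vertices adjacent outside C} to the adjacent pair $w, v \notin C$ yields $G[C] \cong G/e[C]$, which is exactly what is required.

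The only mild subtlety — and the one place I would be careful — is ensuring that $w$ really has a neighbour. This is immediate whenever $|V(G)| \geq 2$ from the connectedness hypothesis, but if one wanted to be fully explicit one could note that either $C = \emptyset$ and any edge of $G$ works, or $C \neq \emptyset$ and then any shortest path in $G$ from $w$ to $C$ contains an edge among its first two vertices, both of which lie outside $C$ (by minimality and the fact that $w \notin N(C)$). I do not expect any real obstacle here; the result is essentially a packaging of Proposition \ref{Proposition: contraction of vertices adjacent outside C} with a pigeonhole observation enabled by connectedness.
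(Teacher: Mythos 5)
Your proposal is correct and follows essentially the same route as the paper: take a vertex outside $C \cup N(C)$ witnessing that $C$ is not dominating, use connectedness to find a neighbour, observe that this neighbour must also lie outside $C$, and apply Proposition \ref{Proposition: contraction of vertices adjacent outside C}. Your extra care about why the neighbour exists and lies outside $C$ just makes explicit what the paper leaves implicit.
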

\begin{proof}
	If $C$ is not dominant, then there is a vertex, $v \notin C$, that is not adjacent to any vertex in $C$. Because $G$ is connected, there is a vertex, $u \notin C$, such that $u$ is adjacent to $v$. Hence, according to Proposition \ref{Proposition: contraction of vertices adjacent outside C}, there exists an edge, $e \in E(G)$, such that $G[C]$ is isomorphic to $G/e[C]$.
\end{proof}

\begin{proposition} \label{Lemma: contraction of a cycle}
	If $G$ is a graph that is isomorphic to $C_{n}$ with $n \geq 4$, then $G/e$ is isomorphic to $C_{n-1}$ for any edge, $e\in E$.
\end{proposition}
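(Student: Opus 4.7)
The plan is to exploit the vertex-transitivity of $C_n$ to reduce to a single representative edge, and then simply read off what contraction does to the explicit cyclic labelling.

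First I would fix a cyclic labelling $v_1, v_2, \dots, v_n$ of $V(G)$ with edge set $\{v_iv_{i+1} : 1 \leq i \leq n-1\} \cup \{v_nv_1\}$. Since every edge of $C_n$ lies on an automorphism orbit (the cycle is edge-transitive), it suffices to treat $e = v_1v_2$. Let $w$ denote the vertex of $G/e$ obtained by identifying $v_1$ and $v_2$. By the definition of edge contraction given in the introduction, $w$ is adjacent precisely to the vertices in $(N(v_1) \cup N(v_2)) \setminus \{v_1, v_2\} = \{v_n\} \cup \{v_3\}$, while all remaining edges $v_iv_{i+1}$ for $3 \leq i \leq n-1$ are preserved unchanged.

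The role of the hypothesis $n \geq 4$ shows up exactly here: it guarantees $v_3 \neq v_n$, so the two new incidences at $w$ are to distinct vertices and no parallel edges need to be suppressed. Consequently the edges of $G/e$ are
\[
\{wv_3, v_3v_4, v_4v_5, \dots, v_{n-1}v_n, v_nw\},
\]
which is the edge set of a cycle on the $n-1$ vertices $w, v_3, v_4, \dots, v_n$. Sending $w \mapsto u_1$ and $v_{i} \mapsto u_{i-1}$ for $3 \leq i \leq n$ exhibits an explicit isomorphism to $C_{n-1}$.

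There is no real obstacle: the argument is a direct unpacking of the contraction definition. The only point requiring attention is the $n \geq 4$ hypothesis, which must be invoked to rule out the creation of a multi-edge (which would occur for $n = 3$, where contraction of $C_3$ produces $K_2$ rather than $C_2$).
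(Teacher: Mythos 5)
Your proof is correct: the paper states this proposition without any proof, treating it as immediate, and your argument is exactly the routine verification it implicitly relies on --- contract one representative edge, note the identified vertex $w$ is adjacent to the two distinct vertices $v_3$ and $v_n$ (distinct precisely because $n \geq 4$), and read off the cycle $C_{n-1}$. Your explicit remark on why $n \geq 4$ is needed (avoiding the parallel edge that arises for $n=3$) is a worthwhile addition rather than a deviation.
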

\begin{proposition} \label{Lemma: contraction of a complete graph}
	If $G$ is a graph that is isomorphic to $K_{n}$ with $n \geq 2$, then $G/e$ is isomorphic to $K_{n-1}$ for any edge, $e\in E$.
\end{proposition}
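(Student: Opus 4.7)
The plan is to reduce this to a direct application of Proposition \ref{Proposition: contraction of vertex adjacent to some neighbours}. Let $e = uv$ be an arbitrary edge of $G \cong K_n$, and set $C = V(G) \setminus \{v\}$. Then $u \in C$ and $v \notin C$, so the hypotheses of Proposition \ref{Proposition: contraction of vertex adjacent to some neighbours} are of the right shape.

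Next, I would verify the neighbourhood containment. Since $G$ is complete, every vertex in $C \setminus \{u\}$ is adjacent to both $u$ and $v$, which gives $N_C(v) \setminus \{u\} = C \setminus \{u\} = N_C(u)$, so in particular $N_C(v) \setminus \{u\} \subseteq N_C(u)$. Applying Proposition \ref{Proposition: contraction of vertex adjacent to some neighbours} then yields $G[C] \cong G/uv[C]$.

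Finally, I would identify both sides of this isomorphism. On one hand, $G[C]$ is obtained from $K_n$ by removing one vertex, hence $G[C] \cong K_{n-1}$. On the other hand, the contraction $G/uv$ has vertex set $C$ (the vertex $v$ is deleted and $u$ serves as the identified vertex), so $G/uv[C] = G/uv$. Combining these gives $G/uv \cong K_{n-1}$.

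There is no real obstacle here; the only thing to be careful about is bookkeeping the vertex set of $G/uv$ to justify $G/uv[C] = G/uv$, so that Proposition \ref{Proposition: contraction of vertex adjacent to some neighbours} (which only asserts an isomorphism of induced subgraphs on $C$) actually gives the global statement about $G/uv$. Since $e$ was an arbitrary edge and all edges of $K_n$ are equivalent up to automorphism, the conclusion holds for every $e \in E$.
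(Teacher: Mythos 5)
Your proposal is correct. Note, however, that the paper offers no proof of this proposition at all---it is stated as an immediate observation (as are Propositions \ref{Proposition: contraction of vertex adjacent to some neighbours} and \ref{Proposition: contraction of vertices adjacent outside C})---so there is nothing to match your argument against; what you have done is supply a justification the paper leaves implicit, by reducing the statement to Proposition \ref{Proposition: contraction of vertex adjacent to some neighbours} with $C = V(G)\setminus\{v\}$. The reduction is sound: the hypothesis $N_{C}(v)\setminus\{u\} \subseteq N_{C}(u)$ holds trivially in a complete graph, $G[C] \cong K_{n-1}$, and your bookkeeping point that $G/uv$ has vertex set exactly $C$ (so that $G/uv[C]$ is the whole contracted graph) is precisely the step that upgrades the induced-subgraph isomorphism to the global claim. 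The only trade-off worth mentioning is that a direct argument is even shorter and does not lean on another unproven proposition: in $G/uv$ the vertex $v$ is removed and no adjacency between the remaining $n-1$ vertices is lost (contraction only adds edges among surviving vertices), so every pair of the $n-1$ remaining vertices is still adjacent and $G/uv \cong K_{n-1}$; your closing remark about edge-transitivity of $K_n$ is unnecessary, since your argument already treated an arbitrary edge.
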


Before we characterise the split graphs, we present a list of special graphs in Figure \ref{Figure: list of forbidden graphs}. These graphs have an interesting property; although they are not split graphs, we can construct one if we contract an edge in any of them. We prove that these are the only graphs that possess such a property. 
\begin{figure}[ht!]
	\centering
	\begin{tikzpicture}[hhh/.style={draw=black,circle,inner sep=2pt,minimum size=0.2cm}]

	\begin{scope}[shift={(0,0)}]
	\node[hhh] 	(a) at (45:1cm) 	{};
	\node[hhh]  (b) at (135:1cm) 	{};
	\node[hhh] 	(c) at (225:1cm) 	{};
	\node[hhh] 	(d) at (-45:1cm) 	{};
	
	\node[hhh] 	(g) at (0:0cm) 	{};
	\node[align=center] 	(h) at (-90:2.2cm) 	{$H_{2}$\\$(W_{4})$};

	\draw (a) -- (b) -- (c) -- (d) --(a);
	\draw (a) -- (g) -- (b)  (c) -- (g) -- (d) ;
	\end{scope}

		\begin{scope}[shift={(4,0)}]
		\node[hhh] 	(a) at (45:1cm) 	{};
		\node[hhh]  (b) at (135:1cm) 	{};
		\node[hhh] 	(c) at (225:1cm) 	{};
		\node[hhh] 	(d) at (-45:1cm) 	{};
		
		\node[hhh] 	(f) at (90:1.2cm) 	{};
		\node[hhh] 	(g) at (-90:1.2cm) 	{};
		\node[align=center] 	(h) at (-90:2.2cm) 	{$H_{3}$ \\ $(E_{2} \vee C_{4})$};

		\draw (a) -- (b) -- (c) -- (d) --(a);
		\draw (a) -- (g) -- (b)  (c) -- (g) -- (d) (a) -- (f) -- (b)  (c) -- (f) -- (d);
	\end{scope}

	\begin{scope}[shift={(-4,0)}]
	\node[hhh] 	(a) at (0:0.8cm) 	{};
	\node[hhh]  (b) at (90:1.2cm) 	{};
	\node[hhh] 	(c) at (180:0.8cm) 	{};
	\node[hhh] 	(d) at (-90:1.2cm) 	{};
	
	\node 	(g) at (0:0cm) 	{$\dots$};
	\node[align=center] 	(h) at (-90:2.1cm) 	{$H_{1}^{l}$ \\  $(K_{2,l}, l \geq 2)$};
	
	\draw (a) -- (b) -- (c) -- (d) --(a);
	\end{scope}

	\begin{scope}[shift={(-5,-4.2)}]
	\node[hhh] 	(a) at (45:1cm) 	{};
	\node[hhh]  (b) at (135:1cm) 	{};
	\node[hhh] 	(c) at (225:1cm) 	{};
	\node[hhh] 	(d) at (-45:1cm) 	{};

	\node[align=center]  	(h) at (-90:1.7cm) 	{$H_{4}$ \\ $(2K_{2})$};
	
	\draw (a) -- (b)  (c) -- (d) ;
	\end{scope}
	
	\begin{scope}[shift={(-1.8,-4.2)}]
		\node[hhh] 	(a) at (0:1.2cm) 	{};
		\node[hhh]  (b) at (0:0.6cm) 	{};
		\node[hhh] 	(c) at (180:0.6cm) 	{};
		\node[hhh] 	(d) at (180:1.2cm) 	{};
		\node[hhh] 	(e) at (0:0cm) 	{};
		
		\node[align=center]  	(h) at (-90:1.7cm) 	{$H_{5}$ \\ $(P_{5})$};
		
		\draw (a) -- (b) --(e) -- (c) -- (d);
	\end{scope}

	\begin{scope}[shift={(1.8,-4.2)}]
	\node[hhh] 	(a) at (90:1cm) 	{};
	\node[hhh]  (b) at (90:0.4cm) 	{};
	\node[hhh] 	(c) at (225:1.2cm) 	{};
	\node[hhh] 	(d) at (-45:1.2cm) 	{};
	\node[hhh] 	(e) at (-90:0.3cm) 	{};

	\node[align=center]  	(h) at (-90:1.7cm) 	{$H_{6}$ \\ $(hammer)$};
	
	\draw (a) -- (b) --(e) -- (c) -- (d) (d) -- (e);
	\end{scope}

	\begin{scope}[shift={(5,-4.2)}]
	\node[hhh] 	(a) at (45:1.2cm) 	{};
	\node[hhh]  (b) at (135:1.2cm) 	{};
	\node[hhh] 	(c) at (225:1.2cm) 	{};
	\node[hhh] 	(d) at (-45:1.2cm) 	{};
	\node[hhh] 	(e) at (180:0cm) 	{};

	\node [align=center] 	(h) at (-90:1.7cm) 	{$H_{7}$ \\ $(butterfly)$};
	
	\draw (a) -- (b) --(e) -- (c) -- (d) (d) -- (e) -- (a);
	\end{scope}

	\end{tikzpicture}	
	\caption{}
	\label{Figure: list of forbidden graphs}
\end{figure}

\begin{lemma}\label{C4 characterisation}
	Given graph $G$, if $G$ has an induced $C_{4}$, then either $G$ is isomorphic to one of $\{H_{1}^{l}: l \geq 2\}$ , $H_{2}$, and $H_{3}$ or there is an edge, $e \in E(G)$, such that $G/e$ has an induced $C_{4}$.
\end{lemma}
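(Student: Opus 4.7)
My plan is to fix an induced $C_4$ subgraph $C$ with vertices $a,b,c,d$ and edges $ab, bc, cd, da$, assume for contradiction that no edge contraction of $G$ preserves an induced $C_4$, and then squeeze $G$ into one of the three listed shapes.

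First, Propositions \ref{Proposition: contraction of vertices adjacent outside C} and \ref{Proposition: contraction of vertices not dominated by C} immediately force $C$ to be dominating and $V(G) \setminus C$ to be an independent set, since otherwise one of those propositions exhibits a $C_4$-preserving contraction. Next, I would apply Proposition \ref{Proposition: contraction of vertex adjacent to some neighbours} to every vertex $v \notin C$ and every $u \in N_C(v)$: its hypothesis must fail for each such pair, meaning that for every $u \in N_C(v)$ there exists $w \in N_C(v) \setminus \{u\}$ with $uw \notin E(G)$. Because $a$ and $c$ share the $C$-neighbourhood $\{b,d\}$ while $b$ and $d$ share $\{a,c\}$, a brief case check on the subsets of $\{a,b,c,d\}$ shows that $N_C(v)$ must be one of $\{a,c\}$, $\{b,d\}$, or $\{a,b,c,d\}$. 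This partitions $V(G) \setminus C$ into three classes $X$, $Y$, $Z$ accordingly.

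The crux of the proof is then to show that at most one of $X, Y, Z$ is non-empty and that $|Z| \leq 2$; otherwise an explicit $C_4$-preserving contraction contradicts the assumption. Concretely, when $x \in X$ and $y \in Y$, contracting $ax$ merges $a$ and $x$ into a vertex $u$ with $N_C(u) = \{b,c,d\}$, and then $\{u, b, y, d\}$ induces a $C_4$ in $G/ax$; when $x \in X$ and $z \in Z$, contracting $bz$ leaves $\{a, u, c, x\}$ as an induced $C_4$; and when three distinct $z_1, z_2, z_3 \in Z$ exist, contracting $az_1$ leaves $\{b, z_2, d, z_3\}$ as an induced $C_4$. The remaining mixed cases follow by symmetry between the pairs $\{a,c\}$ and $\{b,d\}$.

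I expect exhibiting these contractions to be the most delicate step, because for each candidate one must verify that the selected four-vertex subset really induces a chordless $4$-cycle and not a paw, diamond, or $K_4 - e$; but each check reduces to a constant-size subgraph. Once these restrictions are in place, the surviving configurations are exactly $X = Y = Z = \emptyset$ giving $G = C_4 = H_1^2$, only one of $X, Y$ non-empty giving $G = K_{2,|X|+2} = H_1^{|X|+2}$, $|Z| = 1$ with $X = Y = \emptyset$ giving $G = W_4 = H_2$, and $|Z| = 2$ with $X = Y = \emptyset$ giving $G = E_2 \vee C_4 = H_3$. These exhaust the graphs pictured in Figure \ref{Figure: list of forbidden graphs}.
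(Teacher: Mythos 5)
Your proposal is correct and takes essentially the same route as the paper: the same three propositions reduce to the situation where $C$ dominates, $V\setminus C$ is independent, and every outside vertex is adjacent to exactly $\{a,c\}$, exactly $\{b,d\}$, or all of $C$, and your explicit contractions for the mixed classes and for three universal vertices correspond directly to the paper's claims handling those same configurations. The surviving cases yield $H_{1}^{l}$, $H_{2}$, and $H_{3}$ exactly as in the paper's proof.
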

\begin{proof}
	\renewcommand{\qedsymbol}{\claimqed}
	Let $C=\{p,q,r,s\} \subseteq V(G)$ that induces $C_{4}$ with edges $pq,qr,rs$, and $sp$. 
	
	Based on Proposition \ref{Proposition: contraction of vertices adjacent outside C}, we observe the following claim.
	\begin{claim}\label{Claim1: C4 characterisation}
		If there are two adjacent vertices not in $C$, then there is an edge, $e \in E(G)$, such that $G/e$ has an induced $C_{4}$.
	\end{claim}

	By applying Proposition \ref{Proposition: contraction of vertices not dominated by C}, the following claim is achieved.
	\begin{claim}\label{Claim2: C4 characterisation}
		If $C$ is not dominant, then there is an edge, $e \in E(G)$, such that $G/e$ has an induced $C_{4}$.
	\end{claim}

	By using Proposition \ref{Proposition: contraction of vertex adjacent to some neighbours}, we obtain the following claim.
	\begin{claim}\label{Claim3: C4 characterisation}
		If $v \notin C$ such that $|N_{C}(v)|=1$ or $v$ is adjacent to exactly two adjacent vertices in $C$ or $|N_{C}(v)|=3$, then there is an edge, $e \in E(G)$, such that $G/e$ has an induced $C_{4}$.
	\end{claim}

	\begin{claim}\label{Claim4: C4 characterisation}
		If for every $v \notin C$, $v$ is adjacent to exactly the same two nonadjacent vertices in $C$, then $G$ is isomorphic to $H_{1}^{l}$, where $l =|V(G) \setminus C| + 2$.
	\end{claim}

	\begin{claim}\label{Claim5: C4 characterisation}
		If $|V(G) \setminus C|=1$, and the vertex not in $C$ is adjacent to all vertices in $C$, then $G$ is isomorphic to $H_{2}$.
	\end{claim}
	\begin{claim}\label{Claim6: C4 characterisation}
		If $|V(G) \setminus C|=2$, and each of the two vertices not in $C$ is adjacent to all vertices in $C$, then $G$ is isomorphic to $H_{3}$.
	\end{claim}	
	
	\begin{claim}\label{Claim8: C4 characterisation}
		If $|V(G) \setminus C|=3$ and each of the three vertices in $V(G) \setminus C$ is adjacent to all vertices in $C$, then there is an edge, $e \in E(G)$, such that $G/e$ has an induced $C_{4}$.
	\end{claim}
	\begin{proof}
		If $ \{u,v,w\} \subseteq V(G) \setminus C $ such that $|N_{C}(u)|=|N_{C}(v)|=|N_{C}(w)|=4$, then $\{p,r,v,w\}$ induces $C_{4}$ in $G/uq$.
	\end{proof}

	\begin{claim}\label{Claim9: C4 characterisation}
		If there are two vertices, $u$ and $v \notin C$, where $|N_{C}(u)|=|N_{C}(v)|=2$ and $N_{C}(u) \cap N_{C}(v) = \{\}$, then there is an edge, $e \in E(G)$, such that $G/e$ has an induced $C_{4}$.
	\end{claim}
	\begin{proof}
		If $v$ is adjacent to exactly two adjacent vertices in $C$, then according to Claim \ref{Claim3: C4 characterisation}, there is an $e \in E(G)$ such that $G/e$ has an induced $C_{4}$. Therefore, we assume that $G$ has no vertex that is adjacent to exactly two adjacent vertices in $C$. 
		Let $v \notin C$ be adjacent to exactly two nonadjacent vertices in $C$, say $p,r$. If there is a vertex, $u \notin C$, that is adjacent to two different nonadjacent vertices in $C$, say $q,s$, then the vertex set, $\{p,r,s,v\}$, induces $C_{4}$ in $G/uq$.
	\end{proof}
	
	The following claim is proved in a similar way to the proof of Claim \ref{Claim9: C4 characterisation}.
	\begin{claim}\label{Claim10: C4 characterisation}
		If there are two vertices, $u$ and $v \notin C$, where $N_{C}(v)=2$ and $N_{C}(u)=4$, then there is an edge, $e \in E(G)$, such that $G/e$ has an induced $C_{4}$.
	\end{claim}
\renewcommand{\qedsymbol}{$\square$}
	Based on Claims \ref{Claim1: C4 characterisation}, $\dots$, \ref{Claim10: C4 characterisation}, the proof is complete.
\end{proof}

\begin{lemma}\label{2K2 characterisation}
		Given graph $G$, if $G$ has an induced $2K_{2}$, then either $G$ is isomorphic to one of $2K_{2}$, $P_{5}$, Hammer, Butterfly, and $C_{6}$ or there is an edge, $e \in E(G)$, such that $G/e$ has an induced $2K_{2}$ or $C_{4}$.
\end{lemma}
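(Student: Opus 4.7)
The plan is to mirror the skeleton of the proof of Lemma~\ref{C4 characterisation}. Fix $C = \{p,q,r,s\} \subseteq V(G)$ inducing the $2K_{2}$ with edges $pq$ and $rs$. The first three claims follow immediately from Propositions~\ref{Proposition: contraction of vertex adjacent to some neighbours}, \ref{Proposition: contraction of vertices adjacent outside C}, and~\ref{Proposition: contraction of vertices not dominated by C}: if two vertices outside $C$ are adjacent, if $C$ is not dominant, or if some $v \notin C$ has $N_{C}(v)$ equal to a singleton or to $\{p,q\}$ or $\{r,s\}$, then contracting the appropriate edge preserves the induced $2K_{2}$ on $C$. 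After these reductions I may assume that $V(G) \setminus C$ is independent, that $C$ is dominant, and that every $v \notin C$ has $N_{C}(v)$ containing one of the nonadjacent pairs $\{p,r\},\{p,s\},\{q,r\},\{q,s\}$.

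The key additional reduction is to invoke Lemma~\ref{C4 characterisation} whenever $G$ has an induced $C_{4}$. Since $G$ contains $2K_{2}$ and each of the graphs $H_{1}^{l}$ ($l \geq 2$), $H_{2}$, and $H_{3}$ is $2K_{2}$-free (a direct check on the $4$-vertex induced subgraphs of $K_{2,l}$, $W_{4}$, and $E_{2} \vee C_{4}$), $G$ is not isomorphic to any of them, so the lemma supplies an edge $e$ with $G/e$ containing induced $C_{4}$, which is one of the desired outcomes. Hence I may further assume $G$ is $C_{4}$-free. This imposes a useful constraint: for each nonadjacent pair $\{x,y\} \subseteq C$, at most one vertex of $V(G) \setminus C$ contains $\{x,y\}$ in its neighbourhood, since any two such vertices together with $x$ and $y$ would induce $C_{4}$.

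The remaining analysis is a finite case split on $|V(G) \setminus C|$. For $|V(G) \setminus C| = 0$, $G = 2K_{2}$. For $|V(G) \setminus C| = 1$, the unique outside vertex $v$ yields $G = P_{5}$, hammer, or butterfly according as $|N_{C}(v)| = 2, 3,$ or $4$. For $|V(G) \setminus C| \geq 2$, a vertex of $|N_{C}| = 4$ is excluded because it alone covers all four nonadjacent pairs. If some $v$ has $|N_{C}(v)| = 3$, say $N_{C}(v) = \{p,q,r\}$, then any other $w \notin C$ must satisfy $r \notin N_{C}(w)$ (else $C_{4}$), $s \in N_{C}(w)$ (since $w$ is problematic), and $\{p,q\} \cap N_{C}(w) \ne \emptyset$; contracting $rs$ makes the merged vertex $r'$ adjacent to both $v$ and $w$, so with $x \in \{p,q\} \cap N_{C}(w)$ the set $\{x, v, r', w\}$ induces $C_{4}$ in $G/rs$. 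Finally, if all outside vertices have $|N_{C}| = 2$, they split into at most the four types $v_{pr}, v_{ps}, v_{qr}, v_{qs}$, one of each; whenever two of these share a neighbour in $\{p,q\}$ (respectively $\{r,s\}$), contracting $rs$ (respectively $pq$) yields an induced $C_{4}$ on the shared neighbour, the merged vertex, and the two outside vertices; the only configuration with no sharing is a pair of opposite types, $\{v_{pr}, v_{qs}\}$ or $\{v_{ps}, v_{qr}\}$, and a direct verification shows $G$ is then exactly $C_{6}$, for which $G/e = C_{5}$ for every $e$, so no contraction produces an induced $2K_{2}$ or $C_{4}$.

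The main obstacle is this last sub-case analysis: isolating $C_{6}$ as the unique two-vertex configuration that admits no useful contraction, while verifying in every other multi-vertex configuration that contracting one of $pq$ or $rs$ produces the required $C_{4}$. The initial three claims and the $C_{4}$-free reduction are routine applications of earlier results; the bookkeeping over which $4$-vertex subset in $G/rs$ or $G/pq$ realises the induced $C_{4}$ is the fiddly part of the proof.
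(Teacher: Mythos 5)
Your proof is correct, and it follows the same skeleton as the paper's (the reductions via Propositions~\ref{Proposition: contraction of vertex adjacent to some neighbours}--\ref{Proposition: contraction of vertices not dominated by C}, the small cases $|V\setminus C|\le 1$ giving $2K_2$, $P_5$, hammer and butterfly, and the identification of $C_6$ in the ``disjoint pair'' configuration), but the treatment of the main case is genuinely organised differently. The paper enumerates pairs of outside vertices by the sizes of their neighbourhoods in $C$ (its Claims~\ref{Claim8: 2K2 characterisation}--\ref{Claim13: 2K2 characterisation}), and in each case exhibits an induced $C_4$, $C_5$ or $C_6$ inside $G$ itself, then falls back on Lemma~\ref{C4 characterisation} and on the cycle-contraction Proposition~\ref{Lemma: contraction of a cycle} to produce the required contraction; you instead apply Lemma~\ref{C4 characterisation} once, up front, to assume $G$ is $C_4$-free (correctly noting that $K_{2,l}$, $W_4$ and $E_2\vee C_4$ are $2K_2$-free, a point the paper leaves implicit each time it invokes Lemma~\ref{C4 characterisation} within its claims), extract the ``no two outside vertices share a nonadjacent pair of $C$'' constraint, and then name the explicit contraction ($G/rs$ or $G/pq$) and the explicit four vertices inducing $C_4$ in the contracted graph. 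Your route avoids the detour through induced $C_5$'s and Proposition~\ref{Lemma: contraction of a cycle} entirely and makes $C_6$ emerge cleanly as the unique non-sharing configuration, at the cost of a little more hands-on verification of adjacencies in $G/e$; the paper's route reuses earlier machinery more heavily and so has less to check locally, but leaves more of the ``which edge, which four vertices'' data implicit. Both arguments are complete at the level of rigour the paper itself adopts.
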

\begin{proof}
		\renewcommand{\qedsymbol}{\claimqed}
	Let $C=\{p,q,r,s\} \subseteq V$, where $pq,rs \in E$, and $C$ induces $2K_{2}$ in $G$.

Based on Proposition \ref{Proposition: contraction of vertices adjacent outside C}, we observe the following claim.
\begin{claim}\label{Claim1: 2K2 characterisation}
	If there are two adjacent vertices not in $C$, then there is an edge, $e \in E(G)$, such that $G/e$ has an induced $2K_{2}$.
\end{claim}

By applying Proposition \ref{Proposition: contraction of vertices not dominated by C}, the following claim is achieved.
\begin{claim}\label{Claim2: 2K2 characterisation}
	If $C$ is not dominant, then there is an edge, $e \in E(G)$, such that $G/e$ has an induced $2K_{2}$.
\end{claim}

	By using Proposition \ref{Proposition: contraction of vertex adjacent to some neighbours}, we obtain the following claim.
\begin{claim}\label{Claim3: 2K2 characterisation}
	If $v \notin C$, such that $N_{C}(v)=1$ or $v$ is adjacent to exactly two adjacent vertices in $C$, then there is an edge, $e \in E(G)$, such that $G/e$ has an induced $2K_{2}$.
\end{claim}

\begin{claim}\label{Claim4: 2K2 characterisation}
	If $V = C$, then $G$ is isomorphic to $2K_{2}$.
\end{claim}
\begin{claim}\label{Claim5: 2K2 characterisation}
	If $|V \setminus C|=1$ and the vertex in $V \setminus C$ is adjacent to exactly two nonadjacent vertices in $C$, then $G$ is isomorphic to $P_{5}$.
\end{claim}
\begin{claim}\label{Claim6: 2K2 characterisation}
	If $|V \setminus C|=1$ and the vertex not in $C$ is adjacent to exactly three vertices in $C$, then $G$ is isomorphic to Hammer.
\end{claim}
\begin{claim}\label{Claim7: 2K2 characterisation}
	If $|V \setminus C|=1$ and the vertex not in $C$ is adjacent to all vertices in $C$, then $G$ is isomorphic to Butterfly.
\end{claim}

\begin{claim}\label{Claim8: 2K2 characterisation}
	If there are two vertices, $u$ and $v \notin C$, such that $|N_{C}(u)|=|N_{C}(v)|=2$, then $G$ is isomorphic to $C_{6}$ or there is an edge, $e \in E(G)$, such that $G/e$ has an induced $2K_{2}$ or $C_{4}$.
\end{claim}
\begin{proof}
	If $u$ or $v$ is adjacent to two adjacent vertices in $C$, then according to Claim \ref{Claim3: 2K2 characterisation}, there is an edge, $e \in E(G)$, such that $G/e$ has an induced $2K_{2}$. Therefore, we assume that neither $u$ nor $v$ is adjacent to two adjacent vertices in $C$.
	If $N_{C}(u)=N_{C}(v)$, say $N_{C}(u)=\{p,r\}$, then vertex set $\{p,r,u,v\}$ induces $C_{4}$ in $G$. According to Lemma \ref{C4 characterisation}, there is an edge, $e \in E(G)$, such that $G/e$ has an induced $C_{4}$. If $|N_{C}(u) \cap N_{C}(v)| = 1$, then there is a vertex set in $G$ that induces $C_{5}$. Thus, according to Proposition \ref{Lemma: contraction of a cycle}, there is an edge, $e \in E(G)$, such that $G/e$ has an induced $C_{4}$.
	If $N_{C}(u) \cap N_{C}(v) = \{\}$, say $N_{C}(u)=\{p,r\}$ and $N_{C}(v)=\{q,s\}$, then vertex set $\{p,q,r,s,u,v\}$ induces $C_{6}$ in $G$. If $w \notin C$, where $|N_{C}(w)|$ equals either $3$ or $4$, then $G$ has an induced $C_{4}$. Thus, $G$ is isomorphic to $C_{6}$.
\end{proof}

\begin{claim}\label{Claim9: 2K2 characterisation}
	If there are two vertices, $u$ and $v \notin C$, such that $|N_{C}(u)|=|N_{C}(v)|=3$, then there is an edge, $e \in E(G)$, such that $G/e$ has an induced $C_{4}$.
\end{claim}
\begin{proof}
	For any two vertices, $u$ and $v \notin C$, such that $|N_{C}(u)|=|N_{C}(v)|=3$, $G$ has an induced $C_{4}$ or $C_{5}$. Thus, according to Lemma \ref{C4 characterisation} and Proposition \ref{Lemma: contraction of a cycle}, there is an edge, $e \in E(G)$, such that $G/e$ has an induced $C_{4}$.
\end{proof}

\begin{claim}\label{Claim10: 2K2 characterisation}
	If there are two vertices, $u$ and $v \notin C$, such that $|N_{C}(u)|=|N_{C}(v)|=4$, then there is an edge, $e \in E(G)$, such that $G/e$ has an induced $C_{4}$.
\end{claim}
\begin{proof}
	For any two vertices, $u$ and $v \notin C$, such that $|N_{C}(u)|=|N_{C}(v)|=4$, $G$ has an induced $C_{4}$. Thus, according to Lemma \ref{C4 characterisation}, there is an edge, $e \in E(G)$, such that $G/e$ has an induced $C_{4}$.
\end{proof}

In a similar way to the proof of Claim \ref{Claim9: 2K2 characterisation}, we can prove the following claim.
\begin{claim}\label{Claim11: 2K2 characterisation}
	If there are two vertices, $u$ and $v \notin C$, such that $|N_{C}(u)|=2$ and $|N_{C}(v)|=3$, then there is an edge, $e \in E(G)$, such that $G/e$ has an induced $C_{4}$.
\end{claim}

In a similar way to the proof of Claim \ref{Claim10: 2K2 characterisation}, we can prove the following two claims.
\begin{claim}\label{Claim12: 2K2 characterisation}
	If there are two vertices, $u$ and $v \notin C$, such that $|N_{C}(u)|=2$ and $|N_{C}(v)|=4$, then there is an edge, $e \in E(G)$, such that $G/e$ has an induced $C_{4}$.
\end{claim}

\begin{claim}\label{Claim13: 2K2 characterisation}
	If there are two vertices, $u$ and $v \notin C$, such that $|N_{C}(u)|=3$ and $|N_{C}(v)|=4$, then there is an edge, $e \in E(G)$, such that $G/e$ has an induced $C_{4}$.
\end{claim}

\renewcommand{\qedsymbol}{$\square$}
Based on Claims \ref{Claim1: 2K2 characterisation}, $\dots$, \ref{Claim13: 2K2 characterisation}, the proof is complete.
\end{proof}

\begin{theorem}\label{Split graph characterisation}
	Given a connected graph, $G$, that is not isomorphic to any graph in Figure \ref{Figure: list of forbidden graphs}, $G$ is split if and only if $G/e$ is split for any $e \in E(G)$.
\end{theorem}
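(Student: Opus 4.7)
The plan is to combine Hammer's characterisation (Theorem \ref{Theorem: split graph hammer characterisation}) with the two structural lemmas proved above. For the forward direction, I would take a split graph $G$ with a $KS$-partition $(K,S)$ and contract an arbitrary edge $e=uv$. Since $S$ is independent, at least one endpoint is in $K$. A one-line check against the definition of edge contraction shows that $(K\setminus\{v\},S)$ is a $KS$-partition of $G/uv$ when $u,v\in K$, and $(K,S\setminus\{v\})$ works when $u\in K$ and $v\in S$; in the latter case, $v$'s neighbours all lie in $K\setminus\{u\}$, which $u$ already dominates, so the neighbourhood of the identified vertex within $S$ is unchanged. Hence $G/e$ is split in either case.

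The backward direction I would prove by contrapositive. Assuming $G$ is not split, Theorem \ref{Theorem: split graph hammer characterisation} provides an induced $2K_{2}$, $C_{4}$, or $C_{5}$ in $G$, and the task is to exhibit one edge $e$ with $G/e$ not split. The induced-$C_{4}$ case is immediate from Lemma \ref{C4 characterisation}: either $G$ coincides with one of the excluded graphs $H_{1}^{l}, H_{2}, H_{3}$, or the lemma produces $e$ with $G/e$ containing an induced $C_{4}$, which is not split.

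The induced-$2K_{2}$ case is handled analogously via Lemma \ref{2K2 characterisation}: either that lemma produces $e$ with $G/e$ containing an induced $2K_{2}$ or $C_{4}$, or $G$ belongs to $\{2K_{2}, P_{5}, \mathrm{hammer}, \mathrm{butterfly}, C_{6}\}$. The first four of these coincide with $H_{4},\dots,H_{7}$ and are excluded by hypothesis, so the only exception I must treat in the main proof is $G=C_{6}$; here Proposition \ref{Lemma: contraction of a cycle} gives $C_{6}/e \cong C_{5}$ for any $e$, and $C_{5}$ is not split, so any edge works.

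The remaining case is an induced $C_{5}$ that does not already yield one of the previous two structures. If $\{v_{1},\dots,v_{5}\}$ induces a $C_{5}$ with the natural cyclic edges, I would contract $v_{1}v_{2}$: the merged vertex $w$ is adjacent to $v_{3}$ and $v_{5}$ but not to $v_{4}$ (since neither $v_{1}$ nor $v_{2}$ is adjacent to $v_{4}$ in the induced $C_{5}$), and $v_{3}v_{5}$ remains a non-edge, so $\{w,v_{3},v_{4},v_{5}\}$ induces a $C_{4}$ in $G/v_{1}v_{2}$. The main obstacle I anticipate is purely bookkeeping: reconciling the $C_{6}$ exception of Lemma \ref{2K2 characterisation} with the forbidden-graph list of Figure \ref{Figure: list of forbidden graphs}, since $C_{6}$ is not on that list yet behaves specially in the $2K_{2}$ analysis; the contraction-to-$C_{5}$ observation above is what makes this harmless.
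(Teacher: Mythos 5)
Your proposal is correct and follows essentially the same route as the paper: the forward direction transfers the $KS$-partition through the contraction, and the converse goes through Theorem \ref{Theorem: split graph hammer characterisation} together with Lemmas \ref{C4 characterisation} and \ref{2K2 characterisation} plus contracting an edge of an induced $C_{5}$. If anything, you are slightly more explicit than the paper, which silently drops the $C_{6}$ exception of Lemma \ref{2K2 characterisation} and invokes Proposition \ref{Lemma: contraction of a cycle} for an induced $C_{5}$ inside a larger graph; your $C_{6}/e \cong C_{5}$ remark and the direct verification that $\{w,v_{3},v_{4},v_{5}\}$ induces a $C_{4}$ close those small gaps.
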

\begin{proof}
	Let $G$ be a split graph with a $KS$-partition for $V$. An edge in $E(G)$ is either connecting two vertices in $K$ or a vertex each in $K$ and $S$. Let $u,v \in K$. According to Proposition \ref{Lemma: contraction of a complete graph}, the order of the complete subgraph induced by $K$ in $G$ decreases by $1$ in $G/uv$. Hence, we can partition $V(G/uv)$ into $\{K \setminus \{v\},S\}$. Consequently, $G/uv$ is a split graph. Let $e$ be an edge in $E(G)$ between a vertex in $K$ and a vertex in $S$, say $v$. We can partition $V(G/e)$ into $\{K,S \setminus \{v\}\}$. Thus, $G/e$ is a split graph. 
	
	Conversely, we prove that if $G$ is not split, then $G/e$ is not split for at least an edge, $e \in E$. Consequently, and based on Theorem \ref{Theorem: split graph hammer characterisation}, we prove that if $G$ is not split, then $G/e$ has an induced $2K_{2}$, $C_{5}$, or $C_{4}$. 

	According to Proposition \ref{Lemma: contraction of a cycle}, if $G$ has an induced $C_{5}$, then there is an edge, $e \in E(G)$, such that $G/e$ is not split.

	According to Lemma \ref{C4 characterisation}, if $G$ has an induced $C_{4}$, then either $G$ is isomorphic to one of $\{H_{1}^{l}$: $l \geq 2\}$, $H_{2}$, and $H_{3}$ or there is an edge, $e \in E(G)$, such that $G/e$ is not split.

	According to Lemma \ref{2K2 characterisation}, if $G$ has an induced $2K_{2}$, then either $G$ is isomorphic to one of $2K_{2}$, $P_{5}$, Hammer, Butterfly or there is an edge, $e \in E(G)$, such that $G/e$ is not split.
	
\end{proof}

Theorem \ref{Split graph characterisation} shows that we can partition the set of all graphs into the following three parts:
\begin{itemize}
	\item The set of split graphs: the contraction of an edge in any graph in this set constructs a split graph.
	\item The set of graphs presented in Figure \ref{Figure: list of forbidden graphs}: these are non-split graphs in which the contraction of an edge in any graph constructs a split graph.
	\item The set of all non-split graph not presented in Figure \ref{Figure: list of forbidden graphs}: the contraction of at least one edge in any graph in this set constructs a non-split graph.
\end{itemize} 
This partition is presented in Figure \ref{Figure: Partition of graphs into split and not split}.
\begin{figure}[ht!]
	\centering
	\begin{tikzpicture}
		\begin{scope}[scale=0.8]
			
		\node[coordinate] (a1) at (1.5,2) {};
		\node[coordinate] (a2) at (1.5,-2) {};
		\node[coordinate] (a11) at (5,1.5) {};
		\node[coordinate] (a111) at (4,2) {};
		\node[coordinate] (a33) at (5.7,0) {};
		\node[coordinate] (a22) at (5,-1.5) {};
		\node[coordinate] (b1) at (-1.5,2) {};
		\node[coordinate] (b11) at (-5,1.5) {};
		\node[coordinate] (b33) at (-5.7,0) {};
		\node[coordinate] (b22) at (-5,-1.5) {};
		\node[coordinate] (b2) at (-1.5,-2) {};
		\draw[ultra thick]	(a1) --	(a2)  (b1) -- (b2) (a1) -- (b1) (a2) -- (b2);
		\draw[ultra thick] (a1) to[out=0,in=135] (a11) to[out=-45,in=45] (a22) to[out=225,in=0] (a2);
		\draw[ultra thick] (b1) to[out=180,in=45] (b11) to[out=225,in=135] (b22) to[out=-45,in=180] (b2);
		
		\node[text width=1.5cm,align=center] (split) at (3,0) {Split graphs}; 
		\node[text width=2cm,align=center] (figure1) at (0,0) {Graphs in Figure $1$}; 
		\node[text width=3cm,align=center] (Nonsplit) at (-3.5,0) {Non split graphs not in Figure $1$};
			
		\draw[->, ultra thick, blue] (a33).. controls +(down:0.1cm) and +(right:2 cm) .. (a22) node [pos=0.6,below, sloped] {$\forall_{e \in E} G/e$};
		\draw[->, ultra thick, blue] (0,2).. controls +(up:2cm) and +(up:2 cm) .. (a111) node [pos=0.5,above, sloped] {$\forall_{e \in E} G/e$};
		\draw[->, ultra thick, blue] (b33).. controls +(up:0.2cm) and +(left:2.5 cm) .. (b22) node [pos=0.6,below, sloped] {$\exists_{e \in E} G/e$};
		
	\end{scope}
	\end{tikzpicture}	
	\caption{}
	\label{Figure: Partition of graphs into split and not split}
\end{figure}

\section{($2K_{2}$, claw)-free graphs}\label{Section: 2K2,claw free}
 A graph is called a \emph{claw} if it has four vertices and three edges with one vertex adjacent to the other three. The \emph{clique number} of a graph is the maximum cardinality of a clique in the graph. The \emph{independent number} of graph $G$ is the maximum cardinality of an independent set in $G$ and is denoted by $\alpha(G)$. 
 
 Moreover, graph $G$ is called \emph{perfect} if the clique number of $H$ equals its chromatic number for any vertex-induced subgraph $ H$ of $ G$. A \emph{hole} is a vertex-induced cycle in $G$ with a length of at least four. An \emph{antihole} is a vertex-induced complement of a hole. In \cite{chudnovsky2006strong}, it was shown that
\begin{theorem}
	A graph is perfect if and only if it contains neither an odd hole nor an odd antihole.
\end{theorem}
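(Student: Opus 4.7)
The statement is the Strong Perfect Graph Theorem of Chudnovsky, Robertson, Seymour and Thomas, so an honest plan has to concede at the outset that only the necessity direction is short; sufficiency is the landmark argument of roughly 150+ pages from \cite{chudnovsky2006strong}, and I would not attempt to reproduce it in a paper of this scope.

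For necessity, I would argue directly on the forbidden configurations. An induced odd hole $C_{2k+1}$ with $k \geq 2$ has clique number $2$ (it is triangle-free) but chromatic number $3$ (it is an odd cycle), so any graph containing such an induced subgraph is not perfect. For an induced odd antihole $\overline{C_{2k+1}}$, independent sets correspond to cliques of $C_{2k+1}$ and hence have size at most $2$, so $\chi(\overline{C_{2k+1}}) \geq \lceil (2k+1)/2 \rceil = k+1$, while cliques of $\overline{C_{2k+1}}$ correspond to independent sets of $C_{2k+1}$ of size at most $k$; thus $\omega = k$ and $\chi = k+1$, again ruling out perfection. Therefore every perfect graph is \emph{Berge}, i.e.\ contains no odd hole and no odd antihole.

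For sufficiency, the plan is a strong induction on $|V(G)|$: assume perfection for all smaller Berge graphs and let $G$ be Berge. The proof proceeds in two layers. The first layer is a \emph{structural theorem} for Berge graphs: every Berge graph either belongs to one of five basic classes (bipartite graphs, line graphs of bipartite graphs, the complements of either of these, or \emph{double split graphs}) or else admits one of a short list of decompositions, namely a $2$-join, the complement of a $2$-join, or a balanced skew partition. The second layer consists of \emph{perfection-preservation lemmas}: each basic class is known to be perfect, and each permitted decomposition yields a perfect graph whenever its parts are perfect; the balanced skew partition case, originally conjectured by Chv\'atal, is the delicate one. Combined with the inductive hypothesis on the smaller pieces produced by each decomposition, these two layers close the argument.

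The main obstacle, and what occupies the bulk of \cite{chudnovsky2006strong}, is the structural theorem itself. Its proof is a very long case analysis organised around whether $G$ contains certain configurations (a suitable wheel, a stretcher, an odd prism, and so on), with each case either exposing one of the basic substructures or forcing one of the listed decompositions to appear. Given the length and technicality of that analysis, my plan in a paper of the present style would simply be to quote \cite{chudnovsky2006strong} for sufficiency and record only the easy necessity direction explicitly.
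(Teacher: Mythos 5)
Your proposal is correct and matches the paper's handling of this statement: the paper simply quotes the Strong Perfect Graph Theorem from \cite{chudnovsky2006strong} without proof, which is exactly what you recommend for the sufficiency direction, and your computation of $\omega$ and $\chi$ for odd holes and odd antiholes correctly disposes of the easy necessity direction. Nothing further is needed.
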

In \cite{brause2019chromatic}, it was proved that
\begin{theorem}\label{Theorem: brause}
	If $G$ is connected ($2K_{2}$,claw)-free with $\alpha(G) \geq 3$, then $G$ is perfect.
\end{theorem}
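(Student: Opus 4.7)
My plan is to prove the stronger conclusion that $G$ is in fact a split graph; perfection then follows at once, since every split graph is chordal and chordal graphs are perfect. By Theorem \ref{Theorem: split graph hammer characterisation}, showing that $G$ is split reduces to showing that $G$ is $(2K_{2}, C_{4}, C_{5})$-free. The $2K_{2}$-freeness is given, so I would rule out induced $C_{4}$'s and $C_{5}$'s separately.

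For the $C_{4}$ case I would appeal to Lemma \ref{C4 characterisation}: if $G$ contains an induced $C_{4}$ on $C=\{p,q,r,s\}$, then either $G$ is one of the exceptional graphs in Figure \ref{Figure: list of forbidden graphs}, or some edge contraction $G/e$ still contains an induced $C_{4}$. Each exceptional graph is excluded directly by the hypotheses: $H_{1}^{l}=K_{2,l}$ for $l \geq 3$ contains a claw (any vertex of the size-$2$ side together with three vertices from the other side), while $K_{2,2}$, $H_{2}=W_{4}$, and $H_{3}=E_{2}\vee C_{4}$ each have independence number $2<3$. For the recursive alternative I would case-split on $N_{C}(v)$ for a vertex $v \notin C$: the case $|N_{C}(v)|=1$ produces an immediate claw centered at the unique neighbour, and the remaining cases are handled by combining $2K_{2}$-freeness with the third element of an independent set of size $\geq 3$ to force either a claw or a $2K_{2}$.

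For the $C_{5}$ case, write $v_{1}v_{2}v_{3}v_{4}v_{5}$ for the induced $C_{5}$. The $2K_{2}$-freeness rigidly constrains any outside vertex $v$: if $vv_{i}$ is an edge, then $\{vv_{i},\, v_{i+2}v_{i+3}\}$ would induce a $2K_{2}$ unless $v$ is also adjacent to $v_{i+2}$ or $v_{i+3}$; if $v$ is non-adjacent to every $v_{i}$, then connectivity supplies a neighbour $v'$ of $v$ whose own attachment pattern to $C_{5}$ is similarly forced. Combining these patterns with claw-freeness leaves no consistent configuration supporting an independent triple, and Proposition \ref{Lemma: contraction of a cycle} may also be invoked to push the contradiction back into the $C_{4}$ setting handled above.

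The main obstacle, where the bookkeeping becomes delicate, is the $C_{4}$ sub-case in which two outside vertices each attach to a pair of non-adjacent vertices of $C$: neither alone produces a claw or a $2K_{2}$, and the argument must exploit the third element of a size-$3$ independent set together with a second (possibly iterated) application of Lemma \ref{C4 characterisation} to force one of the forbidden substructures.
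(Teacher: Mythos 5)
The paper itself does not prove this statement: it is imported verbatim from the cited work of Brause et al., and the paper then \emph{uses} it (to rule out induced $C_{5}$'s, Claim \ref{Claim7: 2K2,claw-free characterisation} inside the proof of Theorem \ref{Proposition: Proving 2K2,claw by using split graph edge contraction characterisation}) on the way to the stronger conclusion that $G$ is split. Your plan runs the logic in the opposite direction: prove splitness from scratch and deduce perfection. That is viable in principle --- the statement you need (a connected $(2K_{2},\text{claw})$-free graph with $\alpha\geq 3$ has no induced $C_{4}$ or $C_{5}$) is true and admits a direct proof --- but note that you must not at any point lean on Theorem \ref{Proposition: Proving 2K2,claw by using split graph edge contraction characterisation} or on its $C_{5}$-step, since within this paper those rest on the very theorem you are proving. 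As written, your sketch has concrete gaps that go beyond compression.

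First, your ``main obstacle'' is misdiagnosed. If $v\notin C$ is adjacent to exactly the two non-adjacent vertices $p,r$ of the induced $C_{4}$ on $\{p,q,r,s\}$, then $\{p,q,s,v\}$ already induces a claw centred at $p$; this case is immediate (it is exactly Claim \ref{Claim9: 2K2,claw-free characterisation} of the paper). The genuinely delicate cases are the ones you dispose of in one sentence: vertices adjacent to exactly two \emph{adjacent} vertices of $C$, or to three or four vertices of $C$, and how such vertices can or cannot sit inside an independent triple --- the analogues of Claims \ref{Claim10: 2K2,claw-free characterisation}--\ref{Claim14: 2K2,claw-free characterisation}, which together with domination of $C$ are the heart of the $C_{4}$ exclusion; none of that analysis appears in your proposal. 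Second, the appeal to Lemma \ref{C4 characterisation} does no work in your set-up: its non-exceptional branch only asserts that \emph{some contraction} still has an induced $C_{4}$, and you cannot iterate it, because neither claw-freeness nor $\alpha\geq 3$ is preserved under contraction; either you argue directly on $G$ (in which case the lemma is superfluous), or you follow the paper's route through Theorem \ref{Split graph characterisation}, which requires proving that \emph{every} contraction is $2K_{2}$-, $C_{4}$- and $C_{5}$-free together with the check against Figure \ref{Figure: list of forbidden graphs} --- a different and heavier bookkeeping than what you describe. The same objection applies to your suggestion of using Proposition \ref{Lemma: contraction of a cycle} to ``push'' the $C_{5}$ case back to the $C_{4}$ case: the resulting $C_{4}$ lives in $G/e$, not in $G$. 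Third, the decisive sentence of your $C_{5}$ analysis (``leaves no consistent configuration supporting an independent triple'') is asserted rather than proved; the true content is that your attachment analysis forces every vertex outside the $C_{5}$ to be adjacent to at least four of its vertices, after which any independent set of size three forces a claw at a common neighbour on the cycle --- and that argument, which is precisely what replaces the citation of Brause et al., must be written out.
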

We use the characterisation in Theorem \ref{Split graph characterisation} to prove the following:

\begin{theorem}\label{Proposition: Proving 2K2,claw by using split graph edge contraction characterisation}
	If $G$ is connected ($2K_{2}$,claw)-free with $\alpha(G) \geq 3$, then $G$ is split.
\end{theorem}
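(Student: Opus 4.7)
The plan is to show that $G$ is $(2K_{2}, C_{5}, C_{4})$-free, after which Theorem \ref{Theorem: split graph hammer characterisation} yields that $G$ is split. $G$ is $2K_{2}$-free by hypothesis, and Theorem \ref{Theorem: brause} gives that $G$ is perfect, so $G$ has no induced odd hole and in particular no induced $C_{5}$. The remaining work is to show $G$ has no induced $C_{4}$.

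Suppose, for contradiction, that $C = \{p, q, r, s\}$ induces a $C_{4}$ in $G$ with edges $pq, qr, rs, sp$. My first step is to catalogue the admissible neighbourhoods $N_{C}(v)$ for $v \notin C$. Claw-freeness at a corner of $C$ eliminates the singletons and the non-adjacent pairs $\{p, r\}, \{q, s\}$: for instance, if $v \sim q$ but $v \not\sim p, r$, then $\{p, r, v\}$ is an independent triple of neighbours of $q$, a claw at $q$. The case $N_{C}(v) = \emptyset$ is excluded by combining $2K_{2}$-freeness with the same claw argument: a neighbour $w$ of such a $v$ must be adjacent to at least one endpoint of every edge of $C$ (else $\{v, w\}$ together with the missed edge gives an induced $2K_{2}$), so $|N_{C}(w)| \geq 3$, after which a claw at $w$ appears with $v$ plus two non-adjacent vertices of $N_{C}(w)$. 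Hence every $v \notin C$ has $N_{C}(v)$ equal to an adjacent pair in $C$, a $3$-subset of $C$, or $C$ itself.

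Next I would exploit $\alpha(G) \geq 3$ by fixing an independent set $I = \{x, y, z\}$ and splitting on $|I \cap C|$. The case $|I \cap C| = 2$ (WLOG $I \supseteq \{p, r\}$) forces the remaining $t \in I$ to have $N_{C}(t) \subseteq \{q, s\}$, which is forbidden by the catalogue. The case $|I \cap C| = 1$, WLOG $I = \{p, a, b\}$, restricts each of $N_{C}(a), N_{C}(b)$ to $\{\{q, r\}, \{r, s\}, \{q, r, s\}\}$; all of these contain $r$. Claw-freeness at $r$ on the triples $\{a, b, q\}$ and $\{a, b, s\}$ then forces at least one of $a, b$ to neighbour $q$ and at least one to neighbour $s$, after which a short enumeration yields either a claw at $q$ (or $s$) on the independent triple $\{p, a, b\}$, or an induced $2K_{2}$ on $\{a, q, b, s\}$. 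The case $|I \cap C| = 0$ is handled by enumerating pairs $(N_{C}(a), N_{C}(b))$: each non-forbidden pair produces either an induced $2K_{2}$ (in the ``disjoint'' pairs such as $\{p, q\}$ versus $\{r, s\}$, using a non-edge of $C$) or a claw at some corner $x \in N_{C}(a) \cap N_{C}(b)$ whose leaves are $a, b$ together with the $C$-vertex opposite to $x$ (in the ``overlapping'' pairs).

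The main obstacle will be this last case, since $|I \cap C| = 0$ has the most pairs to check. What keeps it tractable is that the admissible neighbourhoods fall into only a few orbits under the dihedral action on $C_{4}$, and the interaction between any two non-adjacent vertices with admissible neighbourhoods in $C$ always supplies either a $2K_{2}$ through a non-edge of $C$ or a claw at a corner of $C$ with the opposite $C$-vertex as a leaf.
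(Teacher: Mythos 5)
You take a different route from the paper: instead of the edge-contraction characterisation of Theorem \ref{Split graph characterisation}, you verify the forbidden-subgraph condition of Theorem \ref{Theorem: split graph hammer characterisation} directly, using Theorem \ref{Theorem: brause} to rule out induced $C_{5}$'s and a case analysis around an induced $C_{4}$. Most of this is sound: the catalogue of admissible sets $N_{C}(v)$ is correct (your intermediate assertion ``$|N_{C}(w)|\geq 3$'' is slightly off, since a diagonal pair of $C$ also covers all four edges, but your claw at $w$ with two non-adjacent vertices of $N_{C}(w)$ still applies there), and the cases $|I\cap C|=2$ and $|I\cap C|=1$ check out. The genuine gap is in the case $|I\cap C|=0$: it is \emph{not} true that every pair of non-adjacent vertices $a,b\notin C$ with admissible neighbourhoods produces an induced $2K_{2}$ or a claw. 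Take $N_{C}(a)=N_{C}(b)=C$: the graph induced by $C\cup\{a,b\}$ is $E_{2}\vee C_{4}$ (the graph $H_{3}$ of Figure \ref{Figure: list of forbidden graphs}), which contains neither a claw nor a $2K_{2}$. The same happens for $N_{C}(a)=\{p,q,r\}$, $N_{C}(b)=\{q,r,s\}$, and for $N_{C}(a)=\{p,q\}$, $N_{C}(b)=C$; a direct check of these six-vertex configurations shows they are $(2K_{2},\mathrm{claw})$-free. Moreover, your claw recipe for the ``overlapping'' pairs cannot work as written: the $C$-vertex opposite to $x$ is non-adjacent to $x$ in $C_{4}$, so it can never be a leaf of a claw centred at $x$; where a claw does exist (e.g.\ $N_{C}(a)=N_{C}(b)=\{p,q\}$) its third leaf must be a neighbour of $x$ in $C$ missed by both $a$ and $b$, and no such vertex exists in the configurations above.

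Consequently, pairwise enumeration cannot close the case $|I\cap C|=0$; you must use all three vertices of $I$, which is precisely what the paper does. A repair along your lines: the pairs that \emph{are} eliminated show that either every member of $I$ has at least three neighbours in $C$, in which case the pigeonhole principle yields a vertex of $C$ adjacent to all three members of $I$ and hence a claw (this is the paper's argument in its Claims 13 and 14), or some member of $I$ sees exactly an adjacent pair $\{x,y\}$ of $C$, in which case the surviving pairwise configurations force the other two members of $I$ to be complete to $C$, and then $x$ together with the three members of $I$ induces a claw. With that replacement the proof goes through; as written, the case $|I\cap C|=0$ is unproven.
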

\begin{proof}
	\renewcommand{\qedsymbol}{\claimqed}
	For the sake of contradiction, we assume that $G$ is a connected ($2K_{2}$, claw)-free graph with $\alpha(G) \geq 3$, but it is not split. According to Theorem \ref{Split graph characterisation}, $G$ is either isomorphic to a graph in Figure \ref{Figure: list of forbidden graphs}: $H_{i}$, where $1 \leq i \leq 7$, or there is an edge, $e \in E$, where $G/e$ is not split. 
	
	\begin{claim}\label{Claim1: 2K2,claw-free characterisation}
		Graphs $H_{4},H_{5},H_{6}$, and $H_{7}$ are not $2K_{2}$-free.
	\end{claim}

	\begin{claim}\label{Claim2: 2K2,claw-free characterisation}
		Graph $H_{1}^{l}$, where $l \geq 2$, is not $claw$-free if $\alpha(H_{1}^{l}) \geq 3$.
	\end{claim}
	\begin{proof}
		The vertex set of $H_{1}^{l}$ is a union of set $C=\{p,q,r,s\}$ that induces $C_{4}$ and $l-2$ vertices that are adjacent to exactly two nonadjacent vertices in $C$, say $q,s$. If $\alpha(H_{1}^{l}) \geq 3$, then $l \geq 3$. Let $v \notin  C$. Vertex set $\{p,q,r,v\}$ induces a claw.
	\end{proof}

	\begin{claim}\label{Claim3: 2K2,claw-free characterisation}
		Each graph $H_{2}$ and $H_{3}$ has no independent set with a cardinality larger than $2$.
	\end{claim}

	Based on Claims \ref{Claim1: 2K2,claw-free characterisation}, \ref{Claim2: 2K2,claw-free characterisation}, and \ref{Claim3: 2K2,claw-free characterisation}, the following claim follows.
	\begin{claim}\label{Claim4: 2K2,claw-free characterisation}
		If $G$ is a connected ($2K_{2}$,claw)-free graph with $\alpha(G) \geq 3$, then $G$ is not isomorphic to any graph in Figure \ref{Figure: list of forbidden graphs}.
	\end{claim}

	\begin{claim}\label{Claim5: 2K2,claw-free characterisation}
		If $G$ is $2K_{2}$-free, then $G/e$ is $2K_{2}$-free for any $e \in E(G)$.
	\end{claim}

	\begin{claim}\label{Claim6: 2K2,claw-free characterisation}
		If $G/e$ has a vertex set, $C$, that induces $C_{4}$, then $G$ has an induced $C_{4}$ or $C_{5}$. Further, if $G$ is $C_{5}$-free, then $G$ has either  vertex $v$ that is adjacent to exactly one vertex, two adjacent vertices, or three vertices in $C$ or two adjacent vertices, $u$ and $v \in V \setminus C$.
	\end{claim}

	\begin{claim}\label{Claim7: 2K2,claw-free characterisation}
		If $G$ is ($2K_{2}$, claw)-free with $\alpha(G) \geq 3$, then $G/e$ is $C_{5}$-free for any $e \in E(G)$.
	\end{claim}
	\begin{proof}
		For any edge, $e \in E(G)$, if $G/e$ has an induced $C_{5}$, then $G$ has an induced $C_{5}$ or $C_{6}$. According to Theorem \ref{Theorem: brause}, $G$ does not have a vertex set that induces $C_{5}$. Moreover, because $G$ is $2K_{2}$-free, $G$ has no vertex set that induces $C_{6}$. 
	\end{proof}

Let $C=\{p,q,r,s\} \subseteq V$ that induces $C_{4}$ in $G$ with edges $pq,qr,rs$, and $sp$ .
\begin{claim}\label{Claim8: 2K2,claw-free characterisation}
	$C$ is dominating.
\end{claim}
\begin{proof}
	For the sake of contradiction, we assume that $C$ does not dominate. Therefore, there is a vertex, $v \notin C$, that is not adjacent to any vertex in $C$. Because $G$ is connected, $v$ is adjacent to vertex $u \notin C$. If $u$ is adjacent to no more than one vertex in $C$, say $p$, or exactly two adjacent vertices in $C$, say $p,q$, then $\{r,s,u,v\}$ induces $2K_{2}$ in $G$, and this is a contradiction. Otherwise, $u$ is adjacent to exactly either two nonadjacent vertices in $C$, say $p,r$, or at least three vertices in $C$, say $p,q,r$, and $\{p,r,u,v\}$ induces a claw in $G$, which is a contradiction.
\end{proof}

\begin{claim}\label{Claim9: 2K2,claw-free characterisation}
	If $v \notin C$, such that $N_{C}(v)=1$ or $v$ is adjacent to exactly two nonadjacent vertices in $C$, then $G$ is not claw-free.
\end{claim}
\begin{proof}
	We assume that $v$ is adjacent to exactly either a vertex in $C$, say $p$, or two nonadjacent vertices in $C$, say $p$ and $r$. Hence, $\{p,q,s,v\}$ induces a claw in $G$.
\end{proof}

\begin{claim}\label{Claim10: 2K2,claw-free characterisation}
	If $u$ and $v \notin C$, where $|N_{C}(u)|=|N_{C}(v)|=2$, then $u$ and $v$ are adjacent.
\end{claim}
\begin{proof}
	For the sake of contradiction, we assume that there are two vertices, $u$ and $v \notin C$, where $|N_{C}(u)|=|N_{C}(v)|=2$ but $u$ and $v$ are not adjacent. According to Claim \ref{Claim9: 2K2,claw-free characterisation}, neither $u$ nor $v$ is adjacent to two nonadjacent vertices in $C$. Thus, let $u$ be adjacent to $p$ and $q$. If $v$ is adjacent to $p$ and $q$, then $\{p,s,u,v\}$ induces a claw in $G$, which is a contradiction. Otherwise, $v$ is adjacent to $r$ or $s$; then, $\{p,r,u,v\}$ or $\{q,s,u,v\}$ induces $2K_{2}$ in $G$, which is a contradiction. 
\end{proof}

\begin{claim}\label{Claim11: 2K2,claw-free characterisation}
	If $S$ is an independent set in $G$ with $|S| \geq 3$, then $S \cap C$ is empty.
\end{claim}
\begin{proof}
	For the sake of contradiction, we assume that there is an independent set, $S$, in $G$ with $|S| \geq 3$ and $S \cap C$ is not empty. If $|S \cap C|=2$, then there is a vertex in $S$ that is adjacent to exactly one or two nonadjacent vertices in $C$, which contradicts Claims \ref{Claim8: 2K2,claw-free characterisation} and \ref{Claim9: 2K2,claw-free characterisation}. Otherwise, $|S \cap C|=1$. Let $|S \cap C|=\{p\}$ and $u,v \in S \setminus \{p\}$. According to Claims \ref{Claim8: 2K2,claw-free characterisation} and \ref{Claim10: 2K2,claw-free characterisation}, at most one of $u$ and $v$ is adjacent to exactly two adjacent vertices in $C$. If $u$ and $v$ are adjacent to $q$ or $s$, then $\{p,q,u,v \}$ (or $\{p,s,u,v \}$) induces a claw in $G$, which is a contradiction.
\end{proof}

\begin{claim}\label{Claim12: 2K2,claw-free characterisation}
	If $v \notin C$ and $|N_{C}(v)|=2$, then $v \notin S$, where $S$ is an independent set in $G$ and $|S| \geq 3$.
\end{claim}
\begin{proof}
	According to Claim \ref{Claim9: 2K2,claw-free characterisation}, $v$ is not adjacent to two nonadjacent vertices in $C$. Thus, for the sake of contradiction, we assume that there is a vertex, $v \notin C$, that is adjacent to exactly two adjacent vertices in $C$, say $p$ and $q$, and $v \in S$, where $S$ is an independent set in $G$ with $|S| \geq 3$. According to Claim \ref{Claim11: 2K2,claw-free characterisation}, $S$ does not have any vertex from $C$. Let $u$, $w \in S \setminus \{v\}$. According to Claims \ref{Claim8: 2K2,claw-free characterisation} and \ref{Claim10: 2K2,claw-free characterisation} and because $|N_{C}(v)|=2$, $N_{C}(u)$ and $N_{C}(w)$ are at least equal to 3. If $u$ and $w$ are adjacent to $p$ (or $q$), then $\{ p,u,v,w\}$ (or $\{ q,u,v,w\}$) induces a claw in $G$, which is a contradiction. Otherwise, w.l.o.g. $u$ is adjacent to $p,r$, and $s$, and $w$ is adjacent to $q,r$, and $s$. Therefore, $\{ p,r,v,w\}$ induces $2K_{2}$ in $G$, which is a contradiction.
\end{proof}

\begin{claim}\label{Claim13: 2K2,claw-free characterisation}
	If $v \notin C$ and $|N_{C}(v)|=3$, then $v \notin S$, where $S$ is an independent set in $G$ and $|S| \geq 3$.
\end{claim}
\begin{proof}
	For the sake of contradiction, we assume that there is a vertex, $v \notin C$, with $|N_{C}(v)|=3$ and $v \in S$, where $S$ is an independent set in $G$ and $|S| \geq 3$. According to Claims \ref{Claim8: 2K2,claw-free characterisation}, \ref{Claim11: 2K2,claw-free characterisation}, and \ref{Claim12: 2K2,claw-free characterisation}, if $u \in S$, then $|N_{C}(u)| \geq 3$. Let $u,w \in S \setminus {v}$. According to the pigeonhole principle, $|N_{C}(u) \cap N_{C}(v) \cap N_{C}(w)|  \geq 1$. Let $p \in N_{C}(u) \cap N_{C}(v) \cap N_{C}(w)$; then, $\{p,u,v,w\}$ induces a claw in $G$, which is a contradiction.
\end{proof}

\begin{claim}\label{Claim14: 2K2,claw-free characterisation}
	If $v \notin C$ and $|N_{C}(v)|=4$, then $v \notin S$, where $S$ is an independent set in $G$ and $|S| \geq 3$.
\end{claim}
\begin{proof}
	For the sake of contradiction, we assume that there is a vertex, $v \notin C$, with $|N_{C}(v)|=4$ and $v \in S$ where $S$ is an independent set in $G$ and $|S| \geq 3$. According to Claims \ref{Claim8: 2K2,claw-free characterisation}, \ref{Claim11: 2K2,claw-free characterisation}, \ref{Claim12: 2K2,claw-free characterisation}, and \ref{Claim13: 2K2,claw-free characterisation}, if  $u \in S$, then $N_{C}(u)=4$. Then, any three vertices in $S$ and any vertex in $C$ form a vertex set of cardinality $4$ that induces a claw in $G$, which is a contradiction.
\end{proof}

	Based on Claims \ref{Claim8: 2K2,claw-free characterisation}, $\dots$, \ref{Claim14: 2K2,claw-free characterisation}, we obtain the following claim.
		\begin{claim}\label{Claim15: 2K2,claw-free characterisation}
		If $G$ is ($2K_{2}$,claw)-free with $\alpha(G) \geq 3$, then $G/e$ is $C_{4}$-free for any $ e \ in E (G) $.
	\end{claim}

\renewcommand{\qedsymbol}{$\square$}
Based on Claims \ref{Claim4: 2K2,claw-free characterisation}, \ref{Claim5: 2K2,claw-free characterisation}, \ref{Claim6: 2K2,claw-free characterisation}, \ref{Claim7: 2K2,claw-free characterisation}, and \ref{Claim15: 2K2,claw-free characterisation}, the proof is complete.
\end{proof}

\section{Unbalanced split graphs}\label{Section: unbalanced split}
A \emph{star} denoted by $S_{n}$ is a graph constructed by $E_{1} \vee E_{n}$, where $n$ is a nonpositive integer.
A split graph, $G$, is called \emph{balanced split} if $G$ has a $KS$-partition, where $|K|=\omega(G)$ and $|S|=\alpha(G)$, and \emph{unbalanced split}, otherwise. Based on the work in \cite{hammer1981splittance}, the following theorem appears in \cite{golumbic2004algorithmic} and its proof is presented in \cite{cheng2016split}:
\begin{theorem}[\cite{hammer1981splittance},\cite{golumbic2004algorithmic},\cite{cheng2016split}]\label{unique balanced partition}
	For any $KS$-partition of split graph $G$, exactly one of the following holds:
	(i) $|K|=\omega(G)$ and $|S|=\alpha(G)$.\\
	(ii) $|K|=\omega(G)-1$ and $|S|=\alpha(G)$.\\
	(iii) $|K|=\omega(G)$ and $|S|=\alpha(G)-1$.\\
	Moreover, in $(i)$, the $KS$-partition is unique.
\end{theorem}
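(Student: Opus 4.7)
The plan is to derive size bounds that any $KS$-partition of a split graph must satisfy, rule out the one combinatorial possibility beyond the three stated cases, and then argue uniqueness in case (i) via a direct exchange argument. First, I would observe that for any $KS$-partition $(K,S)$ the inequalities $|K| \leq \omega(G) \leq |K|+1$ and $|S| \leq \alpha(G) \leq |S|+1$ hold: the left sides are immediate, while for the right sides any maximum clique $K^{*}$ meets the independent set $S$ in at most one vertex (two vertices in $S$ are non-adjacent), hence $\omega(G) = |K^{*}| \leq |K|+1$, and symmetrically $\alpha(G) \leq |S|+1$. This restricts $(|K|,|S|)$ to four a priori possibilities: $(\omega,\alpha)$, $(\omega-1,\alpha)$, $(\omega,\alpha-1)$, and the spurious $(\omega-1,\alpha-1)$.

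Next I would eliminate the spurious case. If $|K| = \omega(G)-1$, some maximum clique has the form $K \cup \{v\}$ with $v \in S$ adjacent to every vertex of $K$; if simultaneously $|S| = \alpha(G)-1$, some maximum independent set has the form $S \cup \{u\}$ with $u \in K$ non-adjacent to every vertex of $S$. Applying these two adjacency conditions to the pair $(u,v)$ yields $uv \in E$ (from the first) and $uv \notin E$ (from the second), a contradiction. Since $|K|$ and $|S|$ are determined by the partition, exactly one of (i), (ii), (iii) holds.

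For uniqueness in case (i), I would argue by contradiction. Suppose $(K_{1},S_{1})$ and $(K_{2},S_{2})$ both realise (i) and $K_{1} \neq K_{2}$; pick $v \in K_{1} \setminus K_{2} \subseteq S_{2}$. Every other vertex of $K_{1}$ is adjacent to $v$ and therefore cannot lie in the independent set $S_{2}$, so $K_{1} \setminus \{v\} \subseteq K_{2}$; since $|K_{2}| = |K_{1}| = \omega(G)$, we obtain $K_{2} = (K_{1} \setminus \{v\}) \cup \{u\}$ for a unique $u \in S_{1}$. If $uv$ were an edge then $K_{1} \cup \{u\}$ would be a clique of size $\omega(G)+1$, so $uv \notin E$. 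A short set computation gives $S_{1} = (S_{2} \setminus \{v\}) \cup \{u\}$, and $v$ is non-adjacent to every vertex of $S_{1}$: to those in $S_{2} \setminus \{v\}$ because $v \in S_{2}$, and to $u$ by the previous step. Hence $S_{1} \cup \{v\}$ is an independent set of size $\alpha(G)+1$, contradicting the definition of $\alpha(G)$.

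The main obstacle I anticipate is the uniqueness step: it requires tracking which side of each partition every vertex lies in and chaining a maximality argument for $\omega(G)$ (to force the non-edge $uv$) with one for $\alpha(G)$ (to close the contradiction). The first two steps reduce to short cardinality calculations.
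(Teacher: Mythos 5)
Your proof is correct. Note that the paper itself does not prove this statement: it is imported verbatim with citations (the text explicitly says the proof is presented in the cited reference of Cheng et al.), so there is no in-paper argument to compare against; your proposal supplies a self-contained proof of what the paper treats as known. Your argument is essentially the standard one from the literature: the two-sided bounds $|K|\leq\omega(G)\leq|K|+1$ and $|S|\leq\alpha(G)\leq|S|+1$ follow from the observation that a clique meets $S$ in at most one vertex and an independent set meets $K$ in at most one vertex; the exclusion of the case $(|K|,|S|)=(\omega(G)-1,\alpha(G)-1)$ via the contradictory adjacency requirements on the pair $(u,v)$ is exactly the classical argument; and the exchange argument for uniqueness in case (i) (forcing $K_{2}=(K_{1}\setminus\{v\})\cup\{u\}$, deducing $uv\notin E$ from maximality of $\omega$, and then exhibiting the independent set $S_{1}\cup\{v\}$ of size $\alpha(G)+1$) is sound, including the needed observations that $u\notin K_{1}$ and $v\notin S_{1}$. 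One small step you state without justification but should make explicit: when $|K|=\omega(G)-1$, a maximum clique $K^{*}$ need not a priori contain $K$; it does because $|K^{*}\cap S|\leq 1$ forces $|K^{*}\cap K|\geq|K^{*}|-1=|K|$, hence $K\subseteq K^{*}$, and symmetrically for the maximum independent set when $|S|=\alpha(G)-1$. With that one line added, the proof is complete.
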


\begin{theorem}\label{Theorem: unbalanced split characterisation}
	Let $G$ be a split graph that is not isomorphic to any $S_{n}$ for $n \geq 2$. Then, $G$ is an unbalanced split if and only if there is an edge, $e \in E(G)$, such that $\omega(G/e) = \omega(G)-1$ and $G/e$ is an unbalanced split.
\end{theorem}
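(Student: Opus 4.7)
My plan is to treat the two implications separately, based on the observation (immediate from Theorem~\ref{unique balanced partition}) that a split graph $G$ is balanced precisely when $|V(G)|=\omega(G)+\alpha(G)$ and unbalanced precisely when $|V(G)|=\omega(G)+\alpha(G)-1$. For the backward direction, suppose some $e\in E(G)$ satisfies $\omega(G/e)=\omega(G)-1$ and $G/e$ is unbalanced split. Then $|V(G)|-1=|V(G/e)|=\omega(G/e)+\alpha(G/e)-1=\omega(G)+\alpha(G/e)-2$. An independent set of $G/e$ lifts to one in $G$ of at least the same size (replace the contracted vertex by either endpoint of $e$), so $\alpha(G/e)\le\alpha(G)$, giving $|V(G)|\le\omega(G)+\alpha(G)-1$; by the criterion above, $G$ is unbalanced.

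For the forward direction, assume $G$ is unbalanced split and $G\not\cong S_n$ for any $n\ge 2$. Fix a type-(iii) $KS$-partition $(K,S)$, so $|K|=\omega(G)$ and $|S|=\alpha(G)-1$; since $\alpha(G)=|S|+1$, there must be some $k^*\in K$ with $N(k^*)\cap S=\emptyset$, or else no independent set of $G$ could reach size $|S|+1$. I first dispose of the small-$\omega$ cases: when $\omega(G)\le 2$ the graph $G$ is triangle-free, hence (being connected and chordal) a tree, and the classification of split trees shows that, apart from $K_1$, the only such tree that is unbalanced and not isomorphic to some $S_n$ with $n\ge 2$ is $K_2$, whose single edge contracts to $K_1$ and satisfies the theorem.

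In the main case $\omega(G)\ge 3$ we have $|K\setminus\{k^*\}|\ge 2$, so one picks $k_1,k_2\in K\setminus\{k^*\}$ and sets $e=k_1k_2$. Let $v$ be the vertex produced by identifying $k_1,k_2$, and put $K'=(K\setminus\{k_1,k_2\})\cup\{v\}$. Then $(K',S)$ is a $KS$-partition of $G/e$ with $|K'|=\omega(G)-1$, so $\omega(G/e)\ge\omega(G)-1$. Every clique of $G/e$ is either a subset of $K'$ or of the form $\{s\}\cup Q$ with $s\in S$ and $Q\subseteq N_{G/e}(s)\cap K'$, so it suffices to bound $|N_{G/e}(s)\cap K'|$ by $\omega(G)-2$ for every $s\in S$. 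A direct count gives $|N_{G/e}(s)\cap K'|=|N_G(s)\cap K|-|N_G(s)\cap\{k_1,k_2\}|+\varepsilon$, where $\varepsilon\in\{0,1\}$ equals $1$ iff $s$ is adjacent to $k_1$ or $k_2$ in $G$; this equals $|N_G(s)\cap K|$ when $|N_G(s)\cap\{k_1,k_2\}|\le 1$ and $|N_G(s)\cap K|-1$ otherwise. The only troublesome case is $|N_G(s)\cap K|=\omega(G)-1$; but type-(iii) together with $N(k^*)\cap S=\emptyset$ forces such an $s$ to miss exactly $k^*$ in $K$, hence $s$ is adjacent to both $k_1,k_2$ and the count drops by one. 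Therefore $\omega(G/e)=\omega(G)-1$. Finally, the independent set $S\cup\{k^*\}$ of $G$ has size $\alpha(G)$ and avoids $\{k_1,k_2,v\}$, so it remains independent in $G/e$, yielding $\alpha(G/e)=\alpha(G)$. Consequently $|V(G/e)|=\omega(G/e)+\alpha(G/e)-1$, i.e., $G/e$ is unbalanced split.

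The main obstacle is the clique-number control: contracting an edge incident to $k^*$ would not suffice, because any $s\in S$ adjacent to all of $K\setminus\{k^*\}$ would combine with the contracted vertex and the remainder of $K$ to form a clique of size $\omega(G)$ in $G/e$. Contracting inside $K\setminus\{k^*\}$ instead keeps $k^*$ in $K'$, and the non-adjacency $s\not\sim k^*$ is precisely what prevents the dangerous $s$ from completing a maximum clique; simultaneously, the type-(iii) structure forces that $s$ to neighbour both endpoints of the contracted edge, which is the combinatorial reason the bound holds.
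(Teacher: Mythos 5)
Your proof is correct, and it uses the paper's construction in the forward direction while replacing the paper's converse with a cleaner counting argument. For the backward implication you rely on the criterion, immediate from Theorem \ref{unique balanced partition}, that a split graph is unbalanced exactly when $|V|=\omega+\alpha-1$, combined with $\alpha(G/e)\le\alpha(G)$; the paper instead proves the contrapositive by tracking how the unique balanced $KS$-partition behaves under contraction of $K$--$K$ and $K$--$S$ edges. For the forward implication the chosen edge is essentially the same in both proofs: the paper contracts an edge inside the clique side of a type-(ii) partition ($|K|=\omega-1$, $|S|=\alpha$), and your $K\setminus\{k^*\}$ is exactly that set, since moving $k^*$ into $S$ turns your type-(iii) partition into the paper's type-(ii) one. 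What your write-up adds is the verification, only asserted in the paper, that $\omega(G/e)=\omega(G)-1$ (via the neighbourhood count, the key point being that any $s\in S$ seeing $\omega(G)-1$ vertices of $K$ must miss exactly $k^*$ and hence is adjacent to both contracted vertices) and that $\alpha(G/e)=\alpha(G)$. Two minor remarks: the existence of a type-(iii) partition of an unbalanced split graph deserves a line of justification (given a type-(ii) partition, a maximum clique equals $K\cup\{s\}$ for some $s\in S$ complete to $K$, and moving $s$ into $K$ gives type (iii)); and your explicit treatment of $\omega(G)\le 2$ is welcome, since both your argument and the paper's need two adjacent vertices in the clique side, hence $\omega(G)\ge 3$, but note that $K_1$, which you set aside, is genuinely not covered: it is unbalanced split, is not an excluded star, and has no edge to contract, so the stated equivalence fails for it --- a boundary defect of the theorem statement (the paper's own proof also silently skips $K_1$ and $K_2$; only your argument supplies the contraction of $K_2$ to $K_1$) rather than of your argument.
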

\begin{proof}
	Let $G$ be an unbalanced split graph, then $V(G)$ can be partitioned into a $KS$-partition, where $|K|=\omega(G)-1$ and $|S|=\alpha(G)$. For any two adjacent vertices, $u$ and $v \in K$, $V(G/uv)$ can be partitioned into $K^{'}S$-partitions, where $|K^{'}|=\omega(G)-2$ and $|S|=\alpha(G)$. Thus, $G/uv$ is an unbalanced split with $\omega(G/uv) = \omega(G)-1$.
	
	Conversely, we prove that if $G$ is a balanced split, then for any edge, $e$, $G/e$ is a balanced split or $\omega(G/e) = \omega(G)$. Because $G$ is a balanced split, then based on Theorem \ref{unique balanced partition}, $V(G)$ can be partitioned into the unique $KS$-partition, where $|K|=\omega(G)$ and $|S|=\alpha(G)$. For any two adjacent vertices, $u$ and $v \in K$, $V(G/uv)$ can be partitioned into $K^{'}S$-partitions, where$|K^{'}|=\omega(G)-1$ and $|S|=\alpha(G)$. Thus, either $G/uv$ is a balanced split with $|K^{'}|=\omega(G)-1$ or $G/uv$ is an unbalanced split with $|K^{'}|=\omega(G)$. For any two adjacent vertices, $u \in K$ and $v \in S$, $V(G/uv)$ can be partitioned into $KS^{'}$-partitions, where $|K|=\omega(G)$ and $|S^{'}|=\alpha(G)-1$. Thus, $\omega(G/uv) = \omega(G)$.
\end{proof}

\subsection{Pseudo-split graphs}\label{Section: pseudo-split}
A graph is called $G$ \emph{pseudo-split} if $G$ is ($2K_{2}$, $C_{4}$)-free. In \cite{blazsik1993graphs}, the family of ($2K_{2}$,  $C_{4}$)-free graphs was investigated and later referred to as pseudo-split graphs in \cite{maffray1994linear}. Different authors have characterised pseudo-split graphs as follows. 
\begin{theorem} [\cite{blazsik1993graphs} , \cite{maffray1994linear}] \label{Theorem: pseudo-split}
	A graph is a pseudo-split if and only if its vertex set can be partitioned into
	three sets, $A, B,$ and $C$, such that $A$ induces a clique, $B$ induces an independent set, and $C$ induces $C_{5}$ or is
	empty, such that there are all possible edges between $A$ and $C$, and there are no edges between $B$ and $C$.
\end{theorem}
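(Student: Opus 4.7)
The plan is to prove the two implications separately, deriving the reverse direction quickly from Theorem~\ref{Theorem: split graph hammer characterisation} and concentrating the effort on the forward direction, where the main work is to show that every vertex outside a fixed induced $C_{5}$ relates in a very restricted way to the cycle.

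For the reverse implication, suppose $V(G)$ admits such a partition $A \cup B \cup C$. If $C = \emptyset$, then $G$ is split, hence $(2K_{2}, C_{4})$-free by Theorem~\ref{Theorem: split graph hammer characterisation}. Otherwise $C$ induces $C_{5}$, which itself contains no induced $C_{4}$ or $2K_{2}$, and $G[A \cup B]$ is split. I would then check, by a short case analysis on how four candidate vertices distribute among $A$, $B$, and $C$, that no $C_{4}$ or $2K_{2}$ can be induced. The key facts used are that every vertex of $A$ is completely joined to every selected vertex of $C$ (which destroys the degree-two pattern of a $C_{4}$ and the matching pattern of a $2K_{2}$) and that every vertex of $B$ is non-adjacent to all of $C$ and to the remaining vertices of $B$.

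For the forward implication, assume $G$ is pseudo-split. If $G$ is also $C_{5}$-free, then $G$ is split by Theorem~\ref{Theorem: split graph hammer characterisation}, and we set $C = \emptyset$. Otherwise, fix an induced $C_{5}$ and label its vertices cyclically $c_{1}, \ldots, c_{5}$. The key technical claim, and the main obstacle of the proof, is that every $v \notin C$ satisfies $N_{C}(v) = \emptyset$ or $N_{C}(v) = C$. I would prove this by contradiction, splitting into the cases $|N_{C}(v)| \in \{1, 2, 3, 4\}$ and, within each, exhibiting an explicit four-vertex set formed by $v$ together with three vertices of $C$ that induces a $2K_{2}$ or a $C_{4}$. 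For instance, if $v \sim c_{1}$ only, then $\{v, c_{1}, c_{3}, c_{4}\}$ induces $2K_{2}$; if $v$ is adjacent to exactly the non-consecutive pair $c_{1}, c_{3}$, then $\{v, c_{1}, c_{2}, c_{3}\}$ induces $C_{4}$; and symmetric choices, exploiting the cyclic symmetry of $C_{5}$, handle the remaining subcases up to $|N_{C}(v)| = 4$.

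Having established the key claim, define $A = \{v \notin C : N_{C}(v) = C\}$ and $B = \{v \notin C : N_{C}(v) = \emptyset\}$, so that $V(G) = A \cup B \cup C$ is a partition that automatically satisfies full adjacency between $A$ and $C$ and non-adjacency between $B$ and $C$. If two vertices $a_{1}, a_{2} \in A$ were non-adjacent, then $\{a_{1}, c_{1}, a_{2}, c_{3}\}$ would induce a $C_{4}$, so $A$ must be a clique; and if two vertices $b_{1}, b_{2} \in B$ were adjacent, then $\{b_{1}, b_{2}, c_{1}, c_{2}\}$ would induce a $2K_{2}$, so $B$ must be independent. This would complete the proof.
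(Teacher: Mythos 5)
The paper itself gives no proof of this statement: it is imported verbatim as a known characterisation from \cite{blazsik1993graphs} and \cite{maffray1994linear}, so there is no internal argument to measure yours against; what you have written is the standard proof, and it is essentially correct. The backward direction correctly reduces to Theorem \ref{Theorem: split graph hammer characterisation} when $C=\emptyset$, and the case check you defer is genuinely short (a vertex of $A$ in an induced $C_4$ or $2K_2$ forces either two nonadjacent vertices of $A$ or an edge inside $B$, and without $A$ the configuration must lie entirely in $C$). The forward direction correctly isolates the key claim that every $v\notin C$ satisfies $N_C(v)=\emptyset$ or $N_C(v)=C$, and your two closing witnesses ($\{a_1,c_1,a_2,c_3\}$ giving $C_4$, $\{b_1,b_2,c_1,c_2\}$ giving $2K_2$) do force $A$ to be a clique and $B$ independent. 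The one loose spot is your remark that the remaining subcases of the key claim follow by ``symmetric choices'': the symmetries of $C_5$ preserve both $|N_C(v)|$ and the adjacency pattern of $N_C(v)$, so the consecutive-pair case and the cases $|N_C(v)|\in\{3,4\}$ are \emph{not} images of the two examples you display and need their own witnesses. They all exist, e.g.\ with $N_C(v)=\{c_1,c_2\}$ take $\{v,c_1,c_3,c_4\}$ ($2K_2$); with $N_C(v)=\{c_1,c_2,c_3\}$ take $\{v,c_2,c_4,c_5\}$ ($2K_2$); with $N_C(v)=\{c_1,c_2,c_4\}$ take $\{v,c_2,c_3,c_4\}$ ($C_4$); with $N_C(v)=\{c_1,c_2,c_3,c_4\}$ take $\{v,c_1,c_5,c_4\}$ ($C_4$); so the defect is expository rather than mathematical. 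A fully written version of your argument would in fact make Theorem \ref{Pseudo-split graph characterisation 2}, which the paper obtains by combining this cited statement with Theorem \ref{Split graph characterisation}, independent of the external references.
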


The following result is obtained from Theorems \ref{Split graph characterisation} and \ref{Theorem: pseudo-split}:
\begin{theorem}\label{Pseudo-split graph characterisation 2}
	Given a connected graph, $G$, that is not isomorphic to any graph in Figure \ref{Figure: list of forbidden graphs}, $G$ is pseudo-split if and only if
	\begin{itemize}
		\item $G/e$ is split for any $e \in E(G)$ or
		\item $V(G)$ can be partitioned into three sets, $A, B,$ and $C$, such that $A$ induces a clique, $B$ induces an independent set, and $C$ induces $C_{5}$, where there are all possible edges between $A$ and $C$, and there are no edges between $B$ and $C$.
	\end{itemize}
\end{theorem}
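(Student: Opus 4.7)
The plan is to derive the claimed equivalence directly by combining the earlier pseudo-split characterisation (Theorem \ref{Theorem: pseudo-split}) with the edge-contraction characterisation of split graphs (Theorem \ref{Split graph characterisation}). The two bullet points in the statement correspond exactly to the two possible shapes of the $C$-part appearing in Theorem \ref{Theorem: pseudo-split}: $C$ empty gives a split graph, which I will translate into the edge-contraction condition, while $C$ inducing $C_{5}$ is already the second bullet verbatim.

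For the forward direction, I would assume $G$ is pseudo-split, that is $(2K_{2}, C_{4})$-free, and apply Theorem \ref{Theorem: pseudo-split} to obtain a partition $V(G) = A \cup B \cup C$ with $A$ a clique, $B$ independent, where $C$ either induces $C_{5}$ or is empty. If $C$ induces $C_{5}$, the structure is the second bullet of the conclusion. If $C$ is empty, then $\{A, B\}$ is a $KS$-partition, so $G$ is split; since $G$ is connected and not isomorphic to any graph in Figure \ref{Figure: list of forbidden graphs}, the forward direction of Theorem \ref{Split graph characterisation} gives that $G/e$ is split for every $e \in E(G)$, i.e.\ the first bullet.

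For the converse, I would treat the two bullets separately. If $V(G)$ admits a partition $A \cup B \cup C$ of the described form with $C$ inducing $C_{5}$, Theorem \ref{Theorem: pseudo-split} immediately certifies that $G$ is pseudo-split. If instead $G/e$ is split for every $e \in E(G)$, the backward direction of Theorem \ref{Split graph characterisation}, applicable since $G$ is connected and not in Figure \ref{Figure: list of forbidden graphs}, yields that $G$ is split; then Theorem \ref{Theorem: split graph hammer characterisation} tells us $G$ is $(2K_{2}, C_{5}, C_{4})$-free, so in particular $(2K_{2}, C_{4})$-free, hence pseudo-split.

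There is essentially no genuine obstacle: the two ingredients do all the work, and the only point requiring care is to observe that the standing hypotheses of the current theorem (connectedness and non-membership in Figure \ref{Figure: list of forbidden graphs}) are precisely what is needed to invoke both directions of Theorem \ref{Split graph characterisation}. If anything must be verified carefully, it is just that a partition with $C = \emptyset$ in Theorem \ref{Theorem: pseudo-split} coincides with the usual definition of a $KS$-partition, and that being split implies being pseudo-split via the forbidden subgraph characterisation of split graphs.
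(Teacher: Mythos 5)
Your proposal is correct and follows exactly the route the paper intends: the paper derives this theorem directly by combining Theorem \ref{Theorem: pseudo-split} with Theorem \ref{Split graph characterisation} (plus Theorem \ref{Theorem: split graph hammer characterisation} to pass between split and pseudo-split), which is precisely what you do. Your write-up merely makes explicit the case analysis on whether the $C$-part is empty or induces $C_{5}$, which is the same argument spelled out in detail.
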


\subsection{Nordhaus--Gaddum graphs}
A graph $G$ is called \emph{Nordhaus--Gaddum} (\emph{$NG$} in short) if $\chi(G) +\chi(\bar{G})$ $=$ $|V(G)|$ $+$ $|V(\bar{G})| +1$. $NG$ graphs were investigated and characterised in \cite{finck1968chromatic}, \cite{starr2008complementary}, \cite{collins2012nordhaus}, and \cite{cheng2016split}.
Based on the work in \cite{collins2012nordhaus}, the authors in \cite{cheng2016split} proved the following characterisation for $NG$ graphs:
\begin{theorem}[\cite{cheng2016split}]
	Graph $G$ is an $NG$-graph if and only if $G$ is a pseudo-split but not a balanced split.
\end{theorem}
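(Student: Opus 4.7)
The plan is to prove the biconditional by combining the pseudo-split decomposition from Theorem~\ref{Theorem: pseudo-split} with the balanced/unbalanced split dichotomy from Theorem~\ref{unique balanced partition}, and then evaluating $\chi(G) + \chi(\bar G)$ explicitly in each resulting case.

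For the sufficient direction, assume $G$ is pseudo-split but not balanced split. Theorem~\ref{Theorem: pseudo-split} supplies a partition $V(G) = A \cup B \cup C$ with $A$ a clique, $B$ an independent set, $C$ either empty or inducing $C_{5}$, all $A$--$C$ edges present, and no $B$--$C$ edges. If $C = \emptyset$, then $G$ (and hence $\bar G$) is split and therefore perfect, so $\chi(G) + \chi(\bar G) = \omega(G) + \omega(\bar G) = \omega(G) + \alpha(G)$; since $G$ is unbalanced split, cases (ii) or (iii) of Theorem~\ref{unique balanced partition} give $\omega(G) + \alpha(G) = |V(G)| + 1$. If $C$ induces $C_{5}$, I would compute $\chi(G) = |A| + 3$ by colouring $A$ with $|A|$ private colours, the $C_{5}$ with three fresh colours, and recycling those three colours on $B$ (legitimate because $B$ is independent and has no edges into $C$). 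The symmetric observation that $\bar G$ admits the same pseudo-split decomposition---with the roles of $A$ and $B$ exchanged and $\overline{C_{5}} \cong C_{5}$---then yields $\chi(\bar G) = |B| + 3$, so $\chi(G) + \chi(\bar G) = |A| + |B| + 6 = |V(G)| + 1$.

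For the necessary direction I would argue the contrapositive in two steps. First, if $G$ is balanced split, Theorem~\ref{unique balanced partition}(i) gives $\omega(G) + \alpha(G) = |V(G)|$, and perfectness yields $\chi(G) + \chi(\bar G) = |V(G)|$, so $G$ is not $NG$. Second, I must show that if $G$ is not pseudo-split, i.e.\ $G$ contains an induced $2K_{2}$ or $C_{4}$, then $\chi(G) + \chi(\bar G) \leq |V(G)|$. This is the main obstacle. Because $\overline{2K_{2}} \cong C_{4}$, complementation reduces the task to the single case that $G$ has an induced $C_{4}$ on $\{p,q,r,s\}$; the plan is to exploit that any optimal proper colouring of $G$ must collapse $\{p,r\}$ and $\{q,s\}$ into two forced colour classes, while the edges $pr$ and $qs$ of $\bar G$ force the corresponding cliques in a minimum clique cover of $G$ to be distinct, and then to double-count colour classes against clique-cover cliques on the $C_{4}$ to save the unit needed relative to the trivial bound $\chi(G) + \chi(\bar G) \leq |V(G)| + 1$. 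If this direct combinatorial step proves too delicate, I would instead invoke the classical structural characterisation of $NG$-graphs from \cite{finck1968chromatic}, which already establishes $(2K_{2}, C_{4})$-freeness of $NG$-graphs, to close the argument.
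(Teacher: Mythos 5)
This statement is one the paper does not prove at all: it is imported verbatim from \cite{cheng2016split} (building on \cite{collins2012nordhaus} and \cite{finck1968chromatic}), so there is no internal argument to compare yours against; what matters is whether your sketch stands on its own. The sufficiency direction does: for $C=\emptyset$ the perfectness of split graphs plus cases (ii)/(iii) of Theorem~\ref{unique balanced partition} correctly give $\chi(G)+\chi(\bar G)=\omega(G)+\alpha(G)=|V(G)|+1$, and for $C\cong C_{5}$ the computation $\chi(G)=|A|+3$, $\chi(\bar G)=|B|+3$ is right (the lower bound $\chi(G)\ge|A|+3$ coming from the join $K_{|A|}\vee C_{5}$, the upper bound from recycling the three $C_{5}$-colours on $B$). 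The balanced-split half of the necessity direction is also fine. (You silently replaced the paper's displayed definition $\chi(G)+\chi(\bar G)=|V(G)|+|V(\bar G)|+1$ by the standard $\chi(G)+\chi(\bar G)=|V(G)|+1$; that is the correct reading, the paper's formula is evidently a typo.)

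The genuine gap is the remaining half of necessity: showing that a graph with an induced $C_{4}$ (equivalently, after complementation, $2K_{2}$) satisfies $\chi(G)+\chi(\bar G)\le|V(G)|$. Your key claim that ``any optimal proper colouring of $G$ must collapse $\{p,r\}$ and $\{q,s\}$ into two forced colour classes'' is false: already for $G=C_{4}\cup K_{3}$ (or connected variants obtained by attaching the triangle by a pendant edge) one has optimal colourings in which the two non-adjacent vertices $p$ and $r$ of the induced $C_{4}$ receive different colours, because the chromatic number is governed by the rest of the graph and nothing forces opposite $C_{4}$-vertices into a common class. Without that forcing, the proposed double count of colour classes against clique-cover cliques never gets off the ground, and this is precisely the nontrivial content of the theorem (the equality analysis in the Nordhaus--Gaddum bound, which in \cite{cheng2016split} is handled by a careful induction/degree argument, not by inspecting one induced $C_{4}$). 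Your declared fallback---invoking \cite{finck1968chromatic} for the fact that $NG$-graphs are $(2K_{2},C_{4})$-free---is not a repair but a concession: Finck's theorem \emph{is} the characterisation of graphs attaining $\chi(G)+\chi(\bar G)=|V(G)|+1$, i.e.\ exactly the statement (in its hard direction) you set out to prove, so citing it leaves your attempt, as a self-contained proof, incomplete.
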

In other words, the aforementioned theorem can be formulated as follows:
\begin{theorem}[\cite{cheng2016split}]\label{last}
	Graph $G$ is an $NG$-graph if and only if
	\begin{itemize}
		\item $G$ is unbalanced split or
		\item $V(G)$ can be partitioned into three sets, $A, B,$ and $C$, such that $A$ induces a clique, $B$ induces an independent set, and $C$ induces $C_{5}$, where there are all possible edges between $A$ and $C$, and there are no edges between $B$ and $C$.
	\end{itemize}
\end{theorem}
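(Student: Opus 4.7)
My plan is to derive Theorem \ref{last} directly from the immediately preceding theorem (the characterisation of $NG$-graphs as pseudo-split graphs that are not balanced split) together with the structural description of pseudo-split graphs given in Theorem \ref{Theorem: pseudo-split}. In other words, the two formulations should be logically equivalent, and the proof amounts to a clean case split on the $C_{5}$-part of the pseudo-split decomposition.

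The first step is to invoke the preceding theorem to reduce the claim to showing that the condition ``$G$ is pseudo-split but not balanced split'' is equivalent to the disjunction ``$G$ is unbalanced split, or $V(G)$ admits an $A,B,C$-partition with $C$ inducing $C_{5}$, all edges between $A$ and $C$, and no edges between $B$ and $C$.'' Next, I apply Theorem \ref{Theorem: pseudo-split}, which guarantees that any pseudo-split graph $G$ admits a partition $V(G) = A \cup B \cup C$ with $A$ a clique, $B$ independent, and $C$ inducing either $C_{5}$ or the empty graph, with the stated adjacency pattern between $A$, $B$, and $C$.

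I then split into two cases according to whether $C$ is empty or not. If $C$ is empty, then $\{A,B\}$ is a $KS$-partition of $V(G)$, so $G$ is a split graph; hence ``$G$ pseudo-split and not balanced split'' reduces in this case to ``$G$ split and not balanced split'', i.e.\ $G$ is unbalanced split. Conversely, any unbalanced split graph is trivially pseudo-split (being split, it is $(2K_{2},C_{4})$-free) and, by definition, not balanced split. If on the other hand $C$ induces $C_{5}$, then $G$ contains an induced $C_{5}$, so by Theorem \ref{Theorem: split graph hammer characterisation} it is not a split graph at all, hence in particular not a balanced split; therefore every such $G$ already satisfies ``pseudo-split and not balanced split''. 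Conversely, a pseudo-split graph with $C \neq \emptyset$ in the decomposition of Theorem \ref{Theorem: pseudo-split} is never split (it contains $C_{5}$) and therefore automatically not balanced split.

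Combining the two cases gives the claimed biconditional. There is no real obstacle here: the argument is essentially a rearrangement of definitions, and the only substantive facts used are the preceding $NG$-characterisation, the structural description of pseudo-split graphs from Theorem \ref{Theorem: pseudo-split}, and Hammer--Föld\'es's forbidden-induced-subgraph characterisation of split graphs (Theorem \ref{Theorem: split graph hammer characterisation}) to observe that the presence of an induced $C_{5}$ precludes being split. The only point that requires a small care is checking that the two cases $C = \emptyset$ and $C \cong C_{5}$ are mutually exclusive and together exhaustive within the hypothesis of pseudo-splitness, which is immediate from Theorem \ref{Theorem: pseudo-split}.
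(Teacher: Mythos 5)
Your argument is correct and matches the paper's intent: the paper offers no separate proof of Theorem \ref{last}, presenting it merely as an ``in other words'' restatement of the preceding $NG$-characterisation from \cite{cheng2016split} combined with the pseudo-split decomposition of Theorem \ref{Theorem: pseudo-split}. Your case split on $C=\emptyset$ versus $C\cong C_{5}$ (using Theorem \ref{Theorem: split graph hammer characterisation} to note that an induced $C_{5}$ rules out being split, hence being balanced split) is exactly the routine equivalence the paper leaves implicit, so this is essentially the same approach with the details filled in.
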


Based on Theorems \ref{Theorem: unbalanced split characterisation} and \ref{last}, the following result is obtained:
\begin{theorem}
		Graph $G$ that is not isomorphic to any $S_{n}$ for $n \geq 2$ is an $NG$-graph if and only if 
	\begin{itemize}
		\item there is an edge, $e \in E(G)$, such that $\omega(G/e) = \omega(G)-1$ and $G/e$ is unbalanced split, or
		\item $V(G)$ can be partitioned into three sets, $A, B,$ and $C$, such that $A$ induces a clique, $B$ induces an independent set, and $C$ induces $C_{5}$, where there are all possible edges between $A$ and $C$, and there are no edges between $B$ and $C$.
	\end{itemize}
\end{theorem}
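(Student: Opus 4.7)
The plan is to derive this statement as a direct substitution of Theorem \ref{Theorem: unbalanced split characterisation} into Theorem \ref{last}. The second bullet of our theorem matches the second disjunct in Theorem \ref{last} verbatim, so only the first bullet requires work, and the whole argument reduces to showing that, under the standing hypothesis $G \not\cong S_{n}$ for $n \geq 2$, $G$ is unbalanced split if and only if there exists an edge $e \in E(G)$ with $\omega(G/e) = \omega(G) - 1$ and $G/e$ unbalanced split.

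For the forward direction, I would assume $G$ is an $NG$-graph and split into cases by Theorem \ref{last}. If $G$ is unbalanced split, the forward direction of Theorem \ref{Theorem: unbalanced split characterisation}, together with $G \not\cong S_{n}$, produces the desired edge $e$, which is precisely the first bullet; if $G$ admits the pseudo-split partition with an induced $C_{5}$, that is the second bullet. For the backward direction I would treat the two bullets separately: the second presents $G$ as a pseudo-split graph containing an induced $C_{5}$, so $G$ is not a (balanced) split graph and Theorem \ref{last} declares it $NG$; the first would give $G$ unbalanced split via the backward half of Theorem \ref{Theorem: unbalanced split characterisation}, and hence $NG$ again by Theorem \ref{last}.

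The main obstacle lies in a subtlety in this last step: Theorem \ref{Theorem: unbalanced split characterisation} is phrased for split graphs, so I must first confirm that the edge condition on $G$ forces $G$ itself to be split. I would do this by lifting a $KS$-partition from $G/e$ to $G$: writing $e = uv$ and $w$ for the identified vertex in $G/e$, the equality $\omega(G/e) = \omega(G) - 1$ forces every maximum clique of $G$ to contain both $u$ and $v$, and this rigidity is enough to promote any $KS$-partition of $G/e$ to one of $G$, adding the missing endpoint of $e$ to the clique if $w$ lay in the clique side and splitting $u, v$ across the two sides if $w$ lay in the independent side. Once this lifting is in place the resulting partition realises $G$ as an unbalanced split graph with the correct parameters, and the remainder of the argument is just bookkeeping between Theorems \ref{Theorem: unbalanced split characterisation} and \ref{last}.
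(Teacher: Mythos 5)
You have correctly isolated the real issue, which the paper itself passes over in silence: Theorem \ref{last} converts the statement into ``$G$ unbalanced split $\Leftrightarrow$ bullet 1'', and the backward implication of Theorem \ref{Theorem: unbalanced split characterisation} is only stated for graphs already known to be split, so one must show that bullet 1 forces $G$ to be split. But your proposed repair does not work. You claim that the rigidity coming from $\omega(G/e)=\omega(G)-1$ (every maximum clique of $G$ contains both $u$ and $v$) is enough to lift a $KS$-partition of $G/e$ to $G$. It is not: a vertex adjacent to the contracted vertex $w$ in $G/e$ need only be adjacent to \emph{one} of $u,v$ in $G$, so neither ``add the missing endpoint to the clique side'' nor ``split $u,v$ across the two sides'' produces a clique/independent set in general. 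Worse, the conclusion you are trying to reach fails under the hypotheses you actually use (rigidity plus $G/e$ split). Take $V(G)=\{u,v,x,y,a,b\}$ with $\{u,v,x,y\}$ a $K_4$ and the additional edges $va$, $ab$, $bu$, $ax$. Then $\{u,v,a,b\}$ induces $C_4$, so $G$ is not split; the unique maximum clique $\{u,v,x,y\}$ contains $u$ and $v$, $\omega(G/uv)=3=\omega(G)-1$, and $G/uv$ is split (clique $\{m,a,x\}$, independent set $\{y,b\}$), yet no $KS$-partition of $G/uv$ lifts. So the lifting cannot be justified from the $\omega$-drop alone; the \emph{unbalancedness} of $G/e$, which your argument never uses, is exactly the missing ingredient (in the example above $G/uv$ is balanced, which is why bullet 1 does not actually hold there).

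A correct completion uses unbalancedness through a counting argument rather than a lifting. A split graph $H$ is unbalanced precisely when $|V(H)|=\omega(H)+\alpha(H)-1$ (balanced forces $|V(H)|=\omega(H)+\alpha(H)$ by Theorem \ref{unique balanced partition}, and $|V(H)|\ge \omega(H)+\alpha(H)-1$ always, since a maximum clique and a maximum independent set meet in at most one vertex). Moreover $\alpha(G/uv)\le\alpha(G)$: an independent set of $G/uv$ avoiding $m$ is independent in $G$, and one containing $m$ stays independent after replacing $m$ by $u$. Hence bullet 1 gives $|V(G)|-1=\omega(G/e)+\alpha(G/e)-1\le(\omega(G)-1)+\alpha(G)-1$, i.e.\ $|V(G)|\le\omega(G)+\alpha(G)-1$. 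Taking a maximum clique $Q$ and a maximum independent set $S$ of $G$, $|Q\cup S|\ge\omega(G)+\alpha(G)-1\ge|V(G)|$ forces $V(G)=Q\cup S$ with $|Q\cap S|=1$, so $(Q,\,S\setminus Q)$ is a $KS$-partition and $G$ is split; since $|V(G)|=\omega(G)+\alpha(G)-1$, it is in fact unbalanced, and Theorem \ref{last} finishes the backward direction. With this substitute for your lifting step, the rest of your bookkeeping between Theorems \ref{Theorem: unbalanced split characterisation} and \ref{last} is fine and matches the paper's (tacit) intention.
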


\section{Acknowledgments}
The author would like to sincerely thank Peter Tittmann for his  constructive suggestions and discussions.

\bibliographystyle{apalike}
\bibliography{mybib}

\end{document}